\author{\'Akos K. Matszangosz, Ferenc Sz\"oll\H{o}si}
\date{May 16, 2024. Preprint.}
\address{\'A.K.M.: HUN-REN Alfr\'ed R\'enyi Institute of Mathematics, Re\'altanoda utca 13--15, 1053 Budapest, Hungary}
\email{matszangosz.akos@gmail.com}
\address{F.Sz.: Department of Mathematical Sciences, Interdisciplinary Faculty of Science and Engineering, Shimane University, 1060 Nishikawatsucho, Matsue, Shimane, Japan}
\email{szollosi@riko.shimane-u.ac.jp}
\newtheorem{theorem}{Theorem}[section]
\newtheorem{lemma}[theorem]{Lemma}
\newtheorem{proposition}[theorem]{Proposition}
\theoremstyle{definition}
\newtheorem{defi}{Definition}
\newtheorem{example}{Example}
\newtheorem{remark}{Remark}
\title[A characterization of Hadamard matrices appearing in MUB triplets]{A characterization of complex Hadamard matrices appearing in families of MUB triplets}
\begin{document}
\begin{abstract}
It is shown that a normalized complex Hadamard matrix of order $6$ having three distinct columns, each containing at least one $-1$ entry necessarily belongs to the transposed Fourier family, or to the family of $2$-circulant complex Hadamard matrices. The proofs rely on solving polynomial system of equations by Gr\"obner basis techniques, and make use of a structure theorem concerning regular Hadamard matrices. As a consequence, members of these two families can be easily recognized in practice. In particular, one can identify complex Hadamard matrices appearing in known triplets of pairwise mutually unbiased bases in dimension $6$.
\end{abstract}
\maketitle
\section{Introduction and main results}
Let $\mathbb{C}^n$ denote the usual $n$-dimensional complex vector space equipped with the Hermitian inner product $\left\langle\cdot,\cdot\right\rangle$. A complex Hadamard matrix $H$ of order $n$ is an $n\times n$ complex matrix with unimodular entries, having pairwise complex orthogonal rows. Real and complex Hadamard matrices have been extensively studied because they are useful in applications in communication engineering and quantum information theory \cite{B}, \cite{TZ}. Two Hadamard matrices are called equivalent, if one can be transformed into the other by permuting its rows and columns, and multiplying its rows and columns by complex numbers of modulus $1$. The set of all complex Hadamard matrices equivalent to $H$ is called the equivalence class of $H$. Every equivalence class contains a normalized matrix whose first entries in its rows and columns are preset to $1$. Complex Hadamard matrices have been classified up to $n\leq 5$, and there are a number of known infinite, parametric families of order $6$, see \cite{BN}, \cite{D}, \cite{UH}, \cite{SZ1}, \cite{SZ2}.

Closely related to Hadamard matrices is the concept of mutually unbiased bases (MUBs) \cite{BONDAL}, \cite{GR}, \cite{MORA}, \cite{J}, \cite{L}. A pair of orthonormal bases $\{e_i\colon i\in\{1,2,\dots,n\}\}$ and $\{f_i\colon i\in\{1,2,\dots,n\}\}$ in $\mathbb{C}^n$ is called mutually unbiased, if $|\left\langle e_i,f_j\right\rangle|^2=1/n$ for every $i,j\in\{1,2,\dots,n\}$. By representing these bases as unitary matrices $E$ and $F$ in the standard basis, the transition matrix $T:=\sqrt{n}E^\ast F$ becomes a complex Hadamard matrix (here $E^\ast$ denotes the usual Hermitian adjoint). There are various generalizations of MUBs, such as mutually unbiased weighing matrices \cite{KH}.

While the maximum number $M(n)$ of pairwise mutually unbiased bases in $\mathbb{C}^n$ is known to be $M(p^r)=p^r+1$ whenever $n=p^r$ is a prime power, it is a long-standing open problem to determine the exact value of $M(6)$. Zauner conjectured \cite{Z} that $M(6)=3$, but this remains unresolved despite continuous efforts \cite{BW}, \cite{G}, \cite{HRZ}, \cite{MC}.

Supporting his conjecture, Zauner \cite[pp.~71--74]{Z} exhibits triplets of pairwise mutually unbiased bases using $2$-circulant transition matrices, and in particular establishes the lower bound $M(6)\geq 3$. The main ingredients of this construction are the transposed Fourier family \cite{TZ}:
\begin{equation}\label{Fab}\def\arraystretch{1.2}
F_6^T(a,b)=\left[
\begin{array}{ccc|rrr}
 1 & 1 & 1 & 1 & 1 & 1 \\
 1 & 1 & 1 & -1 & -1 & -1 \\
\hline
 1 & w & w^2 & a & a w & a w^2 \\
 1 & w & w^2 & -a & -a w & -a w^2 \\
\hline
 1 & w^2 & w & b & b w^2 & b w \\
 1 & w^2 & w & -b & -b w^2 & -b w \\
\end{array}
\right],\ \begin{cases}1+w+w^2=0,\\
|a|=|b|=1;
\end{cases}
\end{equation}
and the $2$-circulant family (see the top-right array on \cite[p.~923]{SZ1}):
\begin{equation}\label{X6alpha}\arraycolsep=4.5pt\def\arraystretch{1.4}
X_6(\beta,\gamma,\varepsilon,\varphi)=\left[\begin{array}{cccccc}
1 & 1 & 1 & 1 & 1 & 1\\
 1 & -1 & -\frac{1}{\gamma  \varepsilon } & -\frac{1}{\beta  \varphi } & \frac{1}{\gamma  \varepsilon } & \frac{1}{\beta  \varphi } \\
 1 & -\frac{\varepsilon }{\beta } & -1 & -\frac{\varepsilon }{\gamma  \varphi } & \frac{\varepsilon }{\gamma  \varphi } & \frac{\varepsilon }{\beta } \\
 1 & -\frac{\varphi }{\gamma } & -\frac{\varphi }{\beta  \varepsilon } & -1 & \frac{\varphi }{\gamma } & \frac{\varphi }{\beta  \varepsilon } \\
 1 & \frac{\varepsilon }{\beta } & \frac{\varphi }{\beta  \varepsilon } & \frac{1}{\beta  \varphi } & \frac{1}{\beta  \gamma } & \frac{\gamma }{\beta ^2} \\
 1 & \frac{\varphi }{\gamma } & \frac{1}{\gamma  \varepsilon } & \frac{\varepsilon }{\gamma  \varphi } & \frac{\beta }{\gamma ^2} & \frac{1}{\beta  \gamma } \\
\end{array}\right],\ |\beta|=|\gamma|=|\varepsilon|=|\varphi|=1,
\end{equation}
further subject to the implicit equation
\begin{equation}\label{implicit}
\beta\gamma \varepsilon^2 + \beta \gamma \varphi + \beta^2 \varepsilon \varphi + \gamma \varepsilon \varphi + \beta \gamma^2 \varepsilon \varphi + \beta \gamma \varepsilon \varphi^2=0.
\end{equation}
Using these two families, Zauner's construction leads to a two-parameter family \cite{SZ1} of triplets of mutually unbiased bases, up to a suitably defined equivalence relation.

Thus, the following questions arise naturally: given a triplet of mutually unbiased bases, represented by basis transformation matrices, how can we decide whether or not they come from Zauner's construction? In particular, given a transition matrix $T$, how can we ascertain the existence of a unimodular quadruple $(\beta^\ast,\gamma^\ast,\varepsilon^\ast,\varphi^\ast)$ subject to \eqref{implicit}, resulting in a matrix $X_6(\beta^\ast,\gamma^\ast,\varepsilon^\ast,\varphi^\ast)$ equivalent to $T$?

A common feature of the families shown in \eqref{Fab} and \eqref{X6alpha} is that their members have a normalized form containing three $-1$ entries. It turns out that they are characterized by this property. The main result of this paper is the following.
\begin{theorem}\label{T1}
Let $H$ be a normalized complex Hadamard matrix of order $6$, having three distinct columns, each containing at least one $-1$ entry. Then $H$ belongs to the transposed Fourier family \eqref{Fab}, or to the $2$-circulant family \eqref{X6alpha}, up to equivalence.
\end{theorem}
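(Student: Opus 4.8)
The plan is to work throughout in the dephased (normalized) form, where the only equivalence operations that remain are permutations of the last five rows and of the last five columns, together with the global symmetries of transposition and complex conjugation (which preserve both families). The first step is a combinatorial reduction based on the positions of the three $-1$ entries guaranteed by the hypothesis. Writing $r_1=(1,\dots,1)$ for the first row and using that every other row is orthogonal to $r_1$ and hence sums to $0$, I would first isolate the extremal configuration in which a single row carries $-1$ in all three of the columns. In that case such a row has leading entry $1$, three entries equal to $-1$, and two free unimodular entries $x,y$ with $1-1-1-1+x+y=0$, so $x+y=2$ forces $x=y=1$; the row therefore equals $(1,1,1,-1,-1,-1)$ up to a column permutation. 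This real row splits the columns into two triples and, via orthogonality of the remaining rows against $r_1$ and against it, imposes the doubled-$F_3$ block pattern visible in \eqref{Fab}. I expect this branch to terminate in the transposed Fourier family.

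The complementary branch is the one where no row contains three $-1$'s, so the three $-1$ entries are spread over two or three distinct rows. Here the idea is to permute rows and columns so that the $-1$'s sit on (part of) the diagonal, matching the three diagonal $-1$'s of \eqref{X6alpha}, and then to record the full orthogonality relations $\langle c_i,c_j\rangle=0$ between columns together with the unimodularity relations $z\bar z=1$ for each unknown entry $z$. To keep the number of cases finite and small, I would invoke the structure theorem on regular Hadamard matrices to constrain the real $(\pm1)$ skeleton formed by the all-ones row and column together with the forced $-1$'s; this should prune the placements to a short explicit list and pin down enough entries to make the residual system tractable.

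For each surviving configuration I would then solve the combined orthogonality and unimodularity system by Gr\"obner basis elimination, treating every entry $z$ and its conjugate $\bar z$ as independent indeterminates linked by $z\bar z=1$, and decompose the resulting variety into its irreducible components. The concluding step is to exhibit, for each component, explicit row and column permutations and phase corrections realizing the equivalence to $X_6(\beta,\gamma,\varepsilon,\varphi)$ subject to \eqref{implicit}, or to $F_6^T(a,b)$, and to check that the lower-dimensional strata where free parameters collide do not escape both families.

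I expect the main obstacle to be twofold. Conceptually, the delicate point is establishing completeness of the combinatorial reduction, i.e.\ that the handful of normalized configurations genuinely exhausts all placements of three $-1$-bearing columns up to the admissible symmetries, since a single missed pattern would break the classification. Computationally, the Gr\"obner bases for the spread-out configurations are heavy, because each unimodular unknown is doubled by a conjugate variable and a norm relation; correctly reading off the solution components and matching each to the implicit surface \eqref{implicit} of the $2$-circulant family, rather than to a spurious or degenerate stratum, is where the real care is required.
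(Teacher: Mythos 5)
Your first branch (a single row carrying all three $-1$'s) is sound and coincides with the paper's Proposition~\ref{P21}, and your case split by the number of rows hosting the $-1$'s is the right skeleton. The genuine gaps are in the complementary branch. Your core step --- ``solve the combined orthogonality and unimodularity system by Gr\"obner basis elimination'' on the dephased matrix --- is not viable: even with the three $-1$'s pinned you have roughly twenty unimodular unknowns, each doubled by a conjugate variable, and no such computation terminates in practice (this is essentially why the classification of order-$6$ complex Hadamard matrices is open). For scale, the paper needs a $9.5$-hour computation for Lemma~\ref{L51}, a system in only seven variables, and even that is tractable only because it is enriched with the Haagerup vanishing conditions \eqref{POLYH} on $3\times 4$ submatrices --- necessary conditions for extendability that do \emph{not} follow from orthogonality of the visible rows and are entirely absent from your system. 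The structural inputs your plan is missing are exactly what makes the problem finite: (i) Lemma~\ref{L1}; (ii) the relation $ac=be=df$ of Lemma~\ref{L51}, which collapses the completion of the diagonal pattern \eqref{diagonalpattern} to eight explicit patterns analysed in Proposition~\ref{P3}; and (iii) for the two-row pattern \eqref{PART3}, which you fold into the ``spread-out'' case with no mechanism at all, Karlsson's $H_2$-reducibility theorem (Theorem~\ref{KARLSSON}): a normalized row with two $-1$'s produces a $2\times 2$ Hadamard submatrix and hence a full $3\times 3$ block structure of Hadamard $2\times 2$ blocks. Note also that in the two-row case the three $-1$'s cannot all be moved onto a diagonal, so your normalization step does not even apply there.

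Second, your appeal to the structure theorem on regular Hadamard matrices misreads both what it says and what it requires. Regularity in the sense of \cite{B} concerns pairs of rows whose inner-product terms cancel in pairs (the cancelling property), not the real $\pm 1$ skeleton of entries, so it cannot ``prune the placements'' of $-1$'s. Moreover, Theorem~\ref{TBanica} requires \emph{all} fifteen row pairs to be cancelling, whereas what actually arises here (a vanishing sum such as $d=-b$ in the $4\times 4$ corner) yields only three pairwise cancelling rows; bridging that gap is a new result of the paper (Theorem~\ref{TTriangle}), and its proof is delicate --- for the family $R_2(q)$ orthogonality alone does not propagate the cancelling property, the matrix $W$ in \eqref{W} being an explicit counterexample, so the Haagerup conditions must be imported again (Proposition~\ref{PC2}). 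Two further points need repair: transposition does \emph{not} preserve the transposed Fourier family (generically $F_6(a,b)$ is inequivalent to every $F_6^T(a',b')$, and the hypothesis on $-1$-bearing columns is itself not transposition-invariant), so it cannot be used freely as a symmetry in your reduction; and your closing remark that degenerate strata ``do not escape both families'' is doing real work --- in the two-row branch the analysis passes through the Fourier subfamily $H(a)$ of \eqref{Ha}, which for generic $a$ lies in neither target family, and only the hypothesis of a third $-1$-bearing column forces $a\in\{w,-w,w^2,-w^2\}$ and re-routes the matrix into the $2$-circulant family via \eqref{diagonalpattern}. Without these ingredients the plan stalls at an infeasible computation and an inapplicable structure theorem.
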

\begin{remark}\label{rem1}
The two families are known to intersect at the matrix $F_6^T(w,w)$, where $1+w+w^2=0$. Further, it is easy to see that the family $F_6^T(a,b)$ contains the same set of matrices irrespective of the actual value of the primitive cubic root $w$.
\end{remark}
The proof of Theorem~\ref{T1} consists of three parts, depending on the number of rows of $H$ containing a $-1$ entry. In Section~\ref{S1COL} we consider the case when there exists a single row containing three $-1$ entries, and show that in this case $H$ is a member of the transposed Fourier family. Then, after some preliminaries in Sections~\ref{SECPRE1} and \ref{SECPRE2}, we consider the case of three rows in Section~\ref{S2COLS}, and show that this case leads to the $2$-circulant family. Finally, we deal with the case of two rows in Section~\ref{S3COLS}. Our proofs rely on solving polynomial system of equations by Gr\"obner basis techniques (see \cite{BJ}, \cite{FA}, \cite{LAZ}, \cite{SHI} and the references therein), and make use of an elegant structure theorem concerning regular (in the sense of \cite{B}) complex Hadamard matrices. As a consequence of our results, recognizing whether or not a given $6\times 6$ Hadamard matrix belongs to either of these two families simply boils down to first normalizing the matrix and then having a look at the pattern of $-1$ entries in it. This provides a convenient characterization of these two families, see Theorems~\ref{F6CHAR} and \ref{T54}.%
%
%In a forthcoming paper \cite{MMVW} it was observed that all numerically generated mutually unbiased triplets are a result of Zauner's construction applied to the family $X_6$. This was verified by painstakingly identifying the transfer matrices of the triplets as members of the above mentioned families. The characterization described in this paper gives a simple way of identifying such families, up to machine precision.
\section{The case of a single row: The transposed Fourier family}\label{S1COL}
In this section we consider $6\times 6$ complex Hadamard matrices having a submatrix:
\begin{equation}\label{F6FORM}\left[\begin{array}{rrrr}
1 & 1 & 1 & 1\\
1 & -1 & -1 & -1\\
\end{array}\right].\end{equation}
That is, the normalized matrix has three distinct columns, each containing a $-1$ entry, which are furthermore located in the very same row.

We will frequently use the following Lemmata without any further comments.
\begin{lemma}
Let $a$, $b$, $c$ be complex unimodular numbers. Then, if $a+b+c=0$, then $b=aw$ and $c=aw^2$; or $b=aw^2$ and $c=aw$, where $1+w+w^2=0$.
\end{lemma}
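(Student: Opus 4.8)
The plan is to reduce to the normalized case by dividing through by $a$. Since $|a|=1$ guarantees $a\neq 0$, I set $u:=b/a$ and $v:=c/a$, so that $|u|=|v|=1$ and the hypothesis $a+b+c=0$ becomes $1+u+v=0$. It therefore suffices to show that any pair of unimodular numbers $u,v$ satisfying $1+u+v=0$ is exactly $\{w,w^2\}$ in one of its two orderings; the conclusion for $b=au$ and $c=av$ then follows immediately.

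To pin down $u$, I would eliminate $v$ by writing $v=-(1+u)$ and impose the remaining constraint $|v|=1$, i.e. $|1+u|^2=1$. Expanding, $|1+u|^2=(1+u)(1+\overline{u})=1+u+\overline{u}+|u|^2=2+2\operatorname{Re}(u)$, so the constraint collapses to $\operatorname{Re}(u)=-1/2$. Combined with $|u|=1$, this says that $u$ lies on the intersection of the unit circle with the vertical line $\operatorname{Re}(u)=-1/2$, which consists of exactly the two points $e^{\pm 2\pi i/3}$, the primitive cube roots of unity. Denoting either one by $w$, the other is $w^2=\overline{w}$, and $1+w+w^2=0$ holds since $w$ is a root of $(z^3-1)/(z-1)=z^2+z+1$.

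Finally I would check that the two cases are precisely accounted for: if $u=w$ then $v=-(1+w)=w^2$, and if $u=w^2$ then $v=w$, so the unordered pair is always $\{w,w^2\}$ and both orderings can occur. Multiplying back by $a$ recovers $b=aw,\ c=aw^2$ or $b=aw^2,\ c=aw$, as claimed. I do not expect any genuine obstacle here; the only point requiring a word of care is verifying that the circle meets the line $\operatorname{Re}(u)=-1/2$ in exactly two points, so that these two orderings exhaust all solutions and the two cube roots are indeed distinct — and the elementary computation above settles this cleanly.
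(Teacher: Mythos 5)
Your proof is correct and complete. Note that the paper itself offers no proof of this lemma: it is stated among the preliminaries explicitly as a fact to be ``used without any further comments,'' i.e.\ as folklore. Your argument --- normalizing by $a$, eliminating $v=-(1+u)$, and observing that $|1+u|^2=2+2\operatorname{Re}(u)=1$ forces $\operatorname{Re}(u)=-1/2$, so that $u$ is one of the two primitive cube roots of unity --- is precisely the standard elementary verification the authors had in mind, and your final check that $u=w$ forces $v=w^2$ (and vice versa) correctly accounts for both orderings in the statement.
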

\begin{lemma}\label{L22}
Let $a$, $b$, $c$, $d$ be complex unimodular numbers. Then, if $a+b+c+d=0$, then $(a+d)(b+d)(c+d)=0$. In particular, $d=-a$ and $c=-b$; or $d=-b$ and $c=-a$; or $d=-c$ and $b=-a$.
\end{lemma}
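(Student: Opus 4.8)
The plan is to exploit the unimodularity of the four numbers through the standard conjugation trick. Since $|a|=|b|=|c|=|d|=1$, we have $\overline{a}=1/a$ and likewise for the others. Conjugating the hypothesis $a+b+c+d=0$ therefore yields $1/a+1/b+1/c+1/d=0$, and clearing denominators by multiplying through by the nonzero product $abcd$ gives $abc+abd+acd+bcd=0$. In the language of the elementary symmetric polynomials in the variables $a,b,c,d$, the hypothesis says $e_1=0$ while this new relation says $e_3=0$.

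The key step is then a purely formal identity: expanding the product shows
\[
(a+d)(b+d)(c+d)=abc+(ab+bc+ca)d+(a+b+c)d^2+d^3=e_3+d^2e_1.
\]
Since both $e_1$ and $e_3$ vanish, the right-hand side is $0$, which proves $(a+d)(b+d)(c+d)=0$. I expect the only real content here to be spotting this factorization; once one decides to repackage the conjugated equation as ``$e_3=0$,'' everything else is mechanical, so there is no genuine obstacle beyond the bookkeeping. A more geometric route — arguing directly that four unit vectors summing to zero must split into two antipodal pairs — seems less clean, as extracting the pairing from the modulus relations takes more work than the symmetric-function computation.

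For the ``in particular'' clause I would simply use that a product of complex numbers vanishes only if one of its factors does. If $a+d=0$, then $d=-a$, and substituting back into $a+b+c+d=0$ forces $b+c=0$, i.e.\ $c=-b$; the cases $b+d=0$ and $c+d=0$ are handled identically by symmetry, yielding the three stated alternatives.
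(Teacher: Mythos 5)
Your proof is correct and complete: conjugating $a+b+c+d=0$ and multiplying by the nonzero product $abcd$ indeed gives $abc+abd+acd+bcd=0$, the identity $(a+d)(b+d)(c+d)=e_3+d^2e_1$ checks out by direct expansion, and the case analysis in the final clause (one factor vanishes, then substitute back to pair off the remaining two terms) yields exactly the three stated alternatives. The paper states this lemma without any proof, treating it as folklore to be used ``without any further comments,'' so there is nothing to compare against; your conjugation/symmetric-function argument is the standard way to establish it.
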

When $x,y\in\mathbb{C}^n$ are unimodular, then the Hermitian inner product becomes $\left\langle x,y\right\rangle=\sum_{k=1}^n x_{k}/y_{k}$. For completeness, we record the following folklore result.
\begin{proposition}\label{P21}
Let $H$ be a normalized complex Hadamard matrix of order $6$, containing three $-1$ entries in one of its rows. Then, up to equivalence, $H$ is a member of the transposed Fourier family, shown in \eqref{Fab}.
\end{proposition}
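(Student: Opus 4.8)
\emph{Proof proposal.} The plan is to use the two fixed rows to split the columns into two blocks of three, and then to determine the four remaining rows one block at a time. After normalizing, the hypothesis gives a row, say the second, carrying three $-1$'s; by permuting columns I assume they occupy positions $2,3,4$. Orthogonality with the all-ones first row then reads $-2+1/h_5+1/h_6=0$, and since two unimodular numbers can sum to $2$ only when both equal $1$, the second row is forced to be $(1,-1,-1,-1,1,1)$. A further swap of the columns $\{2,3\}$ with $\{5,6\}$ brings it to $(1,1,1,-1,-1,-1)$, partitioning the columns into a ``$+$'' block $\{1,2,3\}$ and a ``$-$'' block $\{4,5,6\}$ exactly as in \eqref{Fab}.

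Next, for each remaining row $r=(1,r_2,\dots,r_6)$, adding and subtracting the orthogonality relations against the first two rows yields $r_1+r_2+r_3=0$ and $r_4+r_5+r_6=0$ separately. Since $r_1=1$ by normalization, the first Lemma forces $(r_2,r_3)\in\{(w,w^2),(w^2,w)\}$, while the last three entries are $c,cw,cw^2$ in some order for a unimodular $c$. I would call a row \emph{type I} or \emph{type II} according to whether its first block is $(1,w,w^2)$ or $(1,w^2,w)$. The key structural observation is an equality-case argument: two rows of the same type coincide on the first block, where their inner product is $3$, so orthogonality forces their second blocks to have inner product $-3$; as three unimodular products can sum to $-3$ only when each equals $-1$, two rows of the same type are negatives of one another on the second block. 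Consequently no three of the four rows can share a type (three pairwise-antipodal vectors are impossible), and since only two first-block patterns exist, exactly two rows are of each type. Permuting rows I place the type-I rows as rows $3,4$ and the type-II rows as rows $5,6$, and permuting the columns $4,5,6$ I normalize row $3$ to $(1,w,w^2,a,aw,aw^2)$; row $4$ is then its second-block negation.

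It remains to pin down the cyclic order of the second block of the type-II rows, and this is the step I expect to be the main obstacle, since the first part of the argument constrains each type-II second block only up to reordering. Writing the second block of row $5$ as $(bw^{p_1},bw^{p_2},bw^{p_3})$ with $(p_1,p_2,p_3)$ a permutation of $(0,1,2)$, I would compute $\langle\text{row }3,\text{row }5\rangle$: the first-block contribution $1+w\cdot w+w^2\cdot w^2=1+w^2+w=0$ vanishes automatically, so orthogonality reduces to the vanishing of $w^{-p_1}+w^{1-p_2}+w^{2-p_3}$. Three powers of $w$ sum to zero precisely when their exponents are distinct modulo $3$, and inspecting the six permutations shows the admissible ones are exactly those with $p_2\equiv p_1+2$ and $p_3\equiv p_1+1\pmod 3$; in every such case the second block equals $b'(1,w^2,w)$ with $b'=bw^{p_1}$. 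Thus row $5$ takes the form $(1,w^2,w,b',b'w^2,b'w)$ and row $6$ is its second-block negation, so $H$ coincides with $F_6^T(a,b')$ up to the permutations already performed. Finally I would note that the orthogonality relations among the four rows not yet used (rows $4$--$5$, $3$--$6$, and $4$--$6$) are automatically satisfied by the antipodal structure, so no further constraints arise and the identification is complete.
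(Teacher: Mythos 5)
Your proposal is correct and follows essentially the same route as the paper's proof: force the second row to $(1,1,1,-1,-1,-1)$, split each remaining row into two blocks of vanishing sum (hence consisting of third roots of unity up to a unimodular factor), and pin down the cyclic orders via orthogonality against row $3$. The only cosmetic differences are that you count the two first-block types by the antipodality of same-type second blocks where the paper instead uses orthogonality of the first three columns, and that you make explicit the permutation enumeration that the paper compresses into ``analogous arguments''; both arguments close identically.
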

\begin{proof}
Let us denote the rows of $H$ by $h_i$, $i\in\{1,2,3,4,5,6\}$. Up to permutations, we may assume that the $-1$ entries are in the second row of $H$. Since the rows of $H$ are pairwise complex orthogonal, we have $\left\langle h_1,h_2\right\rangle=0$. Consequently the remaining entries in the second row are necessarily $+1$, and we have a matrix of the following form:
	\[\left[
	\begin{array}{rrrrrr}
		1 & 1 & 1 & 1 & 1 & 1 \\
		1 & 1 & 1 & -1 & -1 & -1 \\
		1 & w & x & a & y & z \\
	\end{array}
	\right].\]
By considering $\left\langle h_1,h_3\right\rangle=\left\langle h_2,h_3\right\rangle=0$, it follows that $1+w+x=0$, and hence $w$ and $x$ are primitive complex third root of unity, such that $x=w^2$. This property must hold for any subsequent rows of $H$, therefore, using orthogonality of the first three columns, it has the following form, up to a permutation of the rows:
	\[\left[
\begin{array}{cccrrr}
1 & 1 & 1 & 1 & 1 & 1 \\
1 & 1 & 1 & -1 & -1 & -1 \\
1 & w & w^2 & a & y & z \\
1 & w & w^2 &  &  &  \\
1 & w^2 & w & b &  &  \\
1 & w^2 & w &  &  &  \\
\end{array}
\right]\quad \leadsto\quad \left[
\begin{array}{cccrrr}
	1 & 1 & 1 & 1 & 1 & 1 \\
	1 & 1 & 1 & -1 & -1 & -1 \\
	1 & w & w^2 & a & aw & aw^2 \\
	1 & w & w^2 & -a & -aw & -aw^2 \\
	1 & w^2 & w & b & bw^2 & bw \\
	1 & w^2 & w & -b & -bw^2 & -bw \\
\end{array}
\right].\]
Similarly, since $a+y+z=0$, we see that $y/a$ and $z/a$ must be primitive complex third roots of unity. Up to permuting the last two columns, we may assume that $y=aw$ and $z=aw^2$. Subsequently, $\left\langle h_3,h_4\right\rangle=0$ implies that the unspecified entries in the fourth row must be the negative of those in the third. Considering $\left\langle h_3,h_5\right\rangle=0$, and using analogous arguments, we see that the remaining entries in the fifth row must be $bw^2$ and $bw$. Finally, $\left\langle h_5,h_6\right\rangle=0$ implies that the remaining entries in the last row are the negative of those in the fifth, and we arrive at the family $F_6^T(a,b)$.
\end{proof}
The following is a convenient characterization of this family.
\begin{theorem}\label{F6CHAR}
Let $H$ be a complex Hadamard matrix of order $6$. Then the following are equivalent:
\begin{enumerate}
\item[($a$)] every normalized matrix in the equivalence class of $H$ contains a (not necessarily contiguous) submatrix of the form \eqref{F6FORM};
\item[($b$)] some normalized matrix in the equivalence class of $H$ contains a (not necessarily contiguous) submatrix of the form \eqref{F6FORM};
\item[($c$)] $H$ is a member of the transposed Fourier family \eqref{Fab}, up to equivalence.
\end{enumerate}
\end{theorem}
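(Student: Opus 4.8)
The plan is to establish the cyclic chain of implications $(a)\Rightarrow(b)\Rightarrow(c)\Rightarrow(a)$. The implication $(a)\Rightarrow(b)$ is immediate, since every equivalence class contains at least one normalized matrix, so a statement holding for all normalized representatives in particular holds for one of them. For $(b)\Rightarrow(c)$ I would observe that the second row of the submatrix in \eqref{F6FORM} already exhibits three $-1$ entries in three distinct columns, so the corresponding row of the normalized matrix contains at least three $-1$'s; Proposition~\ref{P21} then applies verbatim and places $H$ in the transposed Fourier family.

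The real content of the theorem lies in $(c)\Rightarrow(a)$, where one must show that the defining $-1$ pattern cannot be destroyed by re-normalizing. The tool I would use is the dephasing formula: normalizing a Hadamard matrix $M$ with respect to a pivot entry $(i,j)$ --- that is, moving row $i$ and column $j$ to the top-left corner and rescaling so that the first row and column become all $1$'s --- yields a matrix whose $(k,l)$ entry equals the cross-ratio $M_{kl}M_{ij}/(M_{il}M_{kj})$. This quantity is manifestly invariant under row and column rescalings. Consequently, writing an arbitrary normalized representative as $N=PD_1MD_2Q$ and cancelling the diagonal factors against the normalization constraints $N_{11}=N_{1l}=N_{k1}=1$, one finds that $N$ coincides, up to a permutation of the indices $2,\dots,6$ (which does not affect the number of $-1$'s in any row), with the dephasing of $M=F_6^T(a,b)$ at some pivot $(i,j)$.

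It then remains to verify that each such dephasing has a row with three $-1$'s, and here the paired structure of $F_6^T(a,b)$ does all the work. Its six rows split into the pairs $(1,2)$, $(3,4)$, $(5,6)$, and within each pair the two rows agree on columns $1,2,3$ and are negatives of one another on columns $4,5,6$. Hence the entrywise ratio of a row to its partner is the constant vector $(1,1,1,-1,-1,-1)$, independently of $a$, $b$, and the root $w$. Taking $k$ to be the partner of the pivot row $i$, the dephased entries of row $k$ are exactly this vector divided by its $j$-th coordinate, which always has precisely three $-1$ entries, namely those lying in the column block not containing $j$. Thus every dephasing, and hence every normalized representative, has a row with three $-1$'s, from which a submatrix of the form \eqref{F6FORM} is extracted using the all-$1$ first row and first column; this establishes $(a)$.

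I expect the only delicate point to be the bookkeeping in the second paragraph, namely confirming that no exotic normalization escapes the description as a dephasing at some pivot, so that the universal quantifier in $(a)$ is genuinely covered. Once the cross-ratio invariance is recorded this is routine, and the concluding verification via the paired rows requires no case analysis in $a$, $b$, or $w$.
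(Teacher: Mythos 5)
Your proposal is correct, and while the skeleton $(a)\Rightarrow(b)\Rightarrow(c)\Rightarrow(a)$ matches the paper (the first two implications are handled identically, via Proposition~\ref{P21} --- noting, as you implicitly do, that ``at least three'' $-1$'s in a row of a normalized matrix forces exactly three, by orthogonality to the all-ones row), your treatment of the substantial implication $(c)\Rightarrow(a)$ takes a genuinely different route. The paper argues by structural invariance: $F_6^T(a,b)$ partitions into six rank-$1$ submatrices of size $2\times 3$, this block structure survives all equivalence operations, so in any normalized representative the first row has a partner row spanning two rank-$1$ $2\times 3$ blocks with it; normalization makes one block constant $1$, and orthogonality of the two rows then forces the other block to consist of $-1$'s. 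You instead parameterize \emph{all} normalized representatives explicitly by the dephasing formula $N_{kl}=M_{ij}M_{kl}/(M_{il}M_{kj})$ at a pivot $(i,j)$, up to permutations fixing the first row and column --- a claim which is correct: writing $N=PD_1MD_2Q$ and imposing $N_{11}=N_{1l}=N_{k1}=1$, the residual diagonal freedom cancels in the cross-ratio, so the bookkeeping worry you flag at the end is indeed closed. Then the constant entrywise ratio $(1,1,1,-1,-1,-1)$ between paired rows of $F_6^T(a,b)$ gives the dephased partner row as $\epsilon_l/\epsilon_j$, with exactly three $-1$'s in the column block not containing $j$, independently of $a$, $b$, $w$; extracting the submatrix \eqref{F6FORM} using the all-ones first row and first column is then immediate. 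Comparing the two: the paper's rank-$1$ argument is shorter and coordinate-free, but relies on orthogonality at the final step and on a rather terse ``irrespective of how its rows and columns are permuted'' claim; your dephasing computation costs a little setup but makes the universal quantifier in $(a)$ airtight via an exact formula, and pins down the $-1$'s from the sign pattern of the row pairing alone, without invoking orthogonality.
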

\begin{proof}
The direction $(a)\Rightarrow(b)$ is trivial, and $(b)\Rightarrow(c)$ follows from the statement of Proposition~\ref{P21}. Therefore, it remains to show that $(c)\Rightarrow (a)$.

As highlighted in formula \eqref{Fab}, $F_6^T(a,b)$ has a natural block partition into $6$ rank-$1$ submatrices of size $2\times 3$ each. Therefore, irrespective of how its rows and columns are permuted, its first row together with another one will span two not necessarily contiguous, disjoint rank-$1$ submatrices of size $2\times 3$. 

Now consider $H$, normalize it, and look at the first row and its pair containing these two rank-$1$ submatrices. Clearly, one of these two submatrices must be the constant $1$ matrix. Further, since these two rows are complex orthogonal, the other $2\times 3$ rank-$1$ submatrix must contain three $-1$ entries. In particular, the normalized form of $H$ contains a submatrix \eqref{F6FORM}.
\end{proof}
\section{Polynomial equations}\label{SECPRE1}
This section introduces polynomial equations in the matrix entries that are used in order to obtain the characterization described in Theorem~\ref{T1}. The notation we use here is particularly convenient for computer algebra. Let $H=[h_{uv}]$ be a $6\times 6$ matrix. At this point we think of the entries $h_{uv}$ as variables. We associate a polynomial to the pair of rows $\{h_x,h_y\}$ of $H$ capturing orthogonality ($x,y\in\{1,2,3,4,5,6\}$):
\begin{equation}\label{ort}
\mathcal{O}((x,y)):=\left(\prod_{v=1}^6 h_{yv}\right)\cdot\left(\sum_{v=1}^6\frac{h_{xv}}{h_{yv}}\right).
\end{equation}
\begin{lemma}\label{L0}
Let $H=[h_{uv}]$ be a complex Hadamard matrix of order $6$. Let $i$, $j\in\{1,2,3,4,5,6\}$ be distinct. Then $\mathcal{O}((i,j))=\mathcal{O}((j,i))=0$.
\end{lemma}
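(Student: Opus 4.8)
The plan is to reduce the statement directly to the definition of complex orthogonality, combined with the closed form of the Hermitian inner product for unimodular vectors recorded just before Proposition~\ref{P21}. First I would recall that every entry $h_{uv}$ of a complex Hadamard matrix is unimodular, so that $\overline{h_{jv}}=1/h_{jv}$ for each $v$. Hence, for any two rows $h_i$ and $h_j$ with $i\neq j$, the Hermitian inner product takes the form
\[
\left\langle h_i,h_j\right\rangle=\sum_{v=1}^6 h_{iv}\,\overline{h_{jv}}=\sum_{v=1}^6\frac{h_{iv}}{h_{jv}}.
\]

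Next, I would invoke the defining property of a complex Hadamard matrix: its rows are pairwise complex orthogonal, so $\left\langle h_i,h_j\right\rangle=0$ whenever $i\neq j$. This says precisely that the sum $\sum_{v=1}^6 h_{iv}/h_{jv}$ vanishes. Substituting this into the definition \eqref{ort}, the factor $\prod_{v=1}^6 h_{jv}$ is a product of unimodular numbers, hence a well-defined nonzero complex number, and multiplying a vanishing sum by it still gives zero; thus $\mathcal{O}((i,j))=\big(\prod_{v=1}^6 h_{jv}\big)\cdot 0=0$. Since $i$ and $j$ are merely distinct indices, the identical argument with their roles exchanged yields $\mathcal{O}((j,i))=0$ as well.

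There is essentially no obstacle in the vanishing itself, which is immediate from orthogonality. The only point worth emphasizing is the role of the prefactor $\prod_{v=1}^6 h_{jv}$: its purpose is to clear denominators and convert the a priori rational orthogonality relation into a genuine \emph{polynomial} identity in the matrix entries, which is the form required for the Gröbner basis computations in the later sections. Accordingly, I would stress that $\mathcal{O}((i,j))$ is a nonzero polynomial as a formal object; the content of the lemma is that it \emph{evaluates} to $0$ on the entries of any complex Hadamard matrix of order $6$.
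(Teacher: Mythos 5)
Your proof is correct and takes essentially the same route as the paper's: unimodularity gives $\overline{h_{jv}}=1/h_{jv}$, so the bracketed sum in \eqref{ort} is precisely the Hermitian inner product $\left\langle h_i,h_j\right\rangle$, which vanishes by row orthogonality, and the nonzero unimodular prefactor preserves the vanishing. Your closing observation that the prefactor serves to clear denominators and turn the orthogonality relation into a genuine polynomial identity is exactly the point of the paper's construction.
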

\begin{proof}
Since $H$ is a complex Hadamard matrix, $1=|h_{uv}|^2=h_{uv}\overline{h}_{uv}$, hence $\overline{h}_{uv}=1/h_{uv}$. Therefore formula \eqref{ort} is well-defined, and evaluating it at $(i,j)$ captures the inner product between the $i$th and $j$th row of $H$, which, of course, must be $0$.
\end{proof}
\begin{remark}\label{newremark}
To see that a polynomial $\mathcal{O}((1,2))$ and its ``conjugate'' $\mathcal{O}((2,1))$ carries different information in general, consider the $2\times 3$ matrix $\left[\begin{smallmatrix}1&1&1\\ 1&a&b\end{smallmatrix}\right]$ with orthogonal rows. Here, while the single polynomial $\mathcal{O}((2,1))=1+a+b=0$ leads to infinitely many complex solutions, considering it together with $\mathcal{O}((1,2))=a+b+ab=0$ yields a system of two polynomial equations resulting in only finitely many solutions.
\end{remark}
Lemma~\ref{L0} describes an algebraic relation between the entries of a $2\times 6$ submatrix of $H$ using the polynomial \eqref{ort}. It turns out, that it is possible to associate analogous polynomials to $3\times 4$ submatrices. Let $i$, $j$, $k\in\{1,2,3,4,5,6\}$ be pairwise distinct, and let $p$, $q$, $r$, $s\in\{1,2,3,4,5,6\}$ be also pairwise distinct. We will frequently consider the following Haagerup polynomial $\mathcal{H}$, associated to the $3\times 4$ submatrix $M$ of $H$, spanned by rows $i$, $j$, $k$ and columns $p$, $q$, $r$, and $s$. First, let us define the following shorthand notations
\[\mathcal{I}((x,y)):=\frac{h_{xp}}{h_{yp}}+\frac{h_{xq}}{h_{yq}}+\frac{h_{xr}}{h_{yr}}+\frac{h_{xs}}{h_{ys}},\qquad \mathcal{J}:=\prod_{\substack{u\in\{i,j,k\}\\ v\in\{p,q,r,s\}}}h_{uv}.\]
Here $(x,y)\in\{i,j,k\}$ are distinct, and we no longer signify the dependence on $(p,q,r,s)$. We remark that when $H$ is an actual complex Hadamard matrix, then the expression $\mathcal{I}((x,y))$ is well-defined. Further, $\mathcal{I}((x,y))=\overline{\mathcal{I}((y,x))}$, and its meaning is the complex inner product between the $x$th and $y$th row of $M$. Now the Haagerup polynomial associated to the submatrix $M$ reads:
\begin{multline}\label{POLYH}
\mathcal{H}((i,j,k),(p,q,r,s)):=\bigl[\mathcal{I}((i,j))\mathcal{I}((j,k))\mathcal{I}((k,i))-4+\mathcal{I}((i,j))\mathcal{I}((j,i))\\
+\mathcal{I}((j,k))\mathcal{I}((k,j))+\mathcal{I}((k,i))\mathcal{I}((i,k))\bigr]\mathcal{J}.
\end{multline}
It is easy to see that $\mathcal{H}$ is invariant up to cyclic permutations of the row indices $(i,j,k)$, and it is invariant up to any permutations of the column indices $(p,q,r,s)$. The following result is well-known, and was used extensively during the classification of $5\times 5$ and construction of $6\times 6$ complex Hadamard matrices.
\begin{lemma}[\cite{UH},\cite{MI},\cite{SZ2}]\label{L1}
Let $H=[h_{uv}]$ be a complex Hadamard matrix of order $6$. Let $i$, $j$, $k\in\{1,2,3,4,5,6\}$ be pairwise distinct, and let $p$, $q$, $r$, $s\in\{1,2,3,4,5,6\}$ be also pairwise distinct. Then \[\mathcal{H}((i,j,k),(p,q,r,s))=\mathcal{H}((k,j,i),(p,q,r,s))=0.\]
\end{lemma}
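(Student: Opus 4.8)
The plan is to verify the bracketed expression in \eqref{POLYH} directly, exploiting that the four chosen columns $\{p,q,r,s\}$ have a two-element complement $\{t,u\}$ in $\{1,\dots,6\}$. Since the identity to be proved is an algebraic relation among the entries of an \emph{actual} complex Hadamard matrix, we may assume $H$ is such a matrix; then $\mathcal{J}\neq 0$, so it suffices to show that the factor in square brackets vanishes. The starting point is Lemma~\ref{L0}: orthogonality of two full rows forces the inner product over all six columns to be zero, whence the partial inner product $\mathcal{I}((x,y))$ over $\{p,q,r,s\}$ equals the negative of the inner product over the complementary pair $\{t,u\}$.

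First I would introduce the complementary ratio variables $a_1=h_{it}/h_{jt}$, $a_2=h_{iu}/h_{ju}$ for the pair $(i,j)$, and analogously $b_1,b_2$ for $(j,k)$ and $c_1,c_2$ for $(k,i)$. By the previous paragraph $\mathcal{I}((i,j))=-(a_1+a_2)$, and similarly $\mathcal{I}((j,k))=-(b_1+b_2)$ and $\mathcal{I}((k,i))=-(c_1+c_2)$. Unimodularity of the entries gives $\mathcal{I}((j,i))=\overline{\mathcal{I}((i,j))}=-(a_1^{-1}+a_2^{-1})$, and likewise for the other two reversed pairs, so that $\mathcal{I}((i,j))\mathcal{I}((j,i))=(a_1+a_2)(a_1^{-1}+a_2^{-1})=2+a_1/a_2+a_2/a_1$; the three such quadratic terms then sum to $6+S$, where $S$ collects the six reciprocal pairs $a_1/a_2+a_2/a_1+b_1/b_2+b_2/b_1+c_1/c_2+c_2/c_1$.

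The key structural input is the pair of telescoping identities
\[
a_1b_1c_1=\frac{h_{it}}{h_{jt}}\cdot\frac{h_{jt}}{h_{kt}}\cdot\frac{h_{kt}}{h_{it}}=1,\qquad a_2b_2c_2=1,
\]
which hold because the ratios around the $3$-cycle of rows cancel in each complementary column separately. Expanding the cubic term $\mathcal{I}((i,j))\mathcal{I}((j,k))\mathcal{I}((k,i))=-(a_1+a_2)(b_1+b_2)(c_1+c_2)$ into its eight monomials and rewriting each of the six mixed ones through these two relations (for instance $a_2b_1c_1=a_2/a_1$ and $a_1b_2c_2=a_1/a_2$) shows that $(a_1+a_2)(b_1+b_2)(c_1+c_2)=2+S$. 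Hence the whole bracket equals $-(2+S)-4+(6+S)=0$, as desired.

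Finally, the second equality $\mathcal{H}((k,j,i),(p,q,r,s))=0$ follows from the same computation with the row $3$-cycle reversed; equivalently, reversing the cycle replaces the cubic term by its complex conjugate while leaving the (already real) quadratic terms unchanged, and the computation above shows the cubic term is in fact real under the Hadamard constraints. I expect the only real obstacle to be organizational: choosing the complementary-column variables and pushing the eight-term expansion through the telescoping identities without a sign or bookkeeping slip. Once the reduction to the two complementary columns is in place, there is no genuine analytic difficulty.
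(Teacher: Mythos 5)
Your proof is correct, and it fills in an argument the paper itself never gives: Lemma~\ref{L1} is stated there as a known result, cited to Haagerup, Ikuta--Munemasa, and Sz\"oll\H{o}si, with no proof in the text. Your route --- passing to the two complementary columns $\{t,u\}$ via full-row orthogonality (Lemma~\ref{L0}), setting $a_1=h_{it}/h_{jt}$, $a_2=h_{iu}/h_{ju}$ and cyclically for $(j,k)$ and $(k,i)$, and then using the telescoping relations $a_1b_1c_1=a_2b_2c_2=1$ to reduce the eight-term expansion of the cubic factor to $2+S$ against the $6+S$ from the three quadratic factors --- is essentially the standard derivation behind the cited results, specialized to order $6$, where the complement of four columns is exactly a pair; I verified the bookkeeping, and the balance $-(2+S)-4+(6+S)=0$ is right. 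Two small points you handled correctly and should keep explicit: $\mathcal{J}\neq 0$ because the entries are unimodular, so killing the bracket suffices; and the reversed cycle $(k,j,i)$ conjugates the cubic term while the quadratic terms $\mathcal{I}((x,y))\mathcal{I}((y,x))=|\mathcal{I}((x,y))|^2$ are real, so the vanishing for the reversed orientation follows since $S$ is real (each summand $a_1/a_2+a_2/a_1$ is twice the real part of a unimodular number, forcing the cubic term $-(2+S)$ to be real). What your argument buys over the paper's citation is a short, self-contained, human-verifiable proof making transparent exactly where unimodularity and the order-$6$ assumption (a two-element column complement) enter; the trade-off is that it obscures the generality of the original sources, where analogous identities are developed for broader classes of submatrices.
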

We remark that when $H$ is a complex Hadamard matrix, then the transposed matrix $H^T$ is also a complex Hadamard matrix. Therefore, polynomials analogous to \eqref{POLYH} can be obtained by considering $H^T$ (or the $4\times 3$ submatrices of $H$).
\begin{example}
As an application of Lemma~\ref{L1}, we show that the following unimodular matrix
\begin{equation}\label{W}
W=\left[
\begin{array}{cccccc}
	1 & 1 & 1 & 1 & 1 & 1 \\
	1 & \mathbf{i} & -1 & -\mathbf{i} & \frac{1-\mathbf{i}}{\sqrt{2}} & \frac{-1+\mathbf{i}}{\sqrt{2}} \\
	1 & -1 & \frac{-1+\mathbf{i}}{\sqrt{2}} & \frac{1+\mathbf{i}}{\sqrt{2}} & \frac{-1-\mathbf{i}}{\sqrt{2}} & \frac{1-\mathbf{i}}{\sqrt{2}} \\
	1 & \frac{2 \sqrt{2}+\mathbf{i}}{3} & \frac{-1-\mathbf{i}}{\sqrt{2}} & \frac{-4-\sqrt{2}+(4-\sqrt{2})\mathbf{i}}{6}  &
	\frac{-1+2\sqrt{2}\mathbf{i}}{3} & -\mathbf{i} \\
\end{array}
\right],\end{equation}
which has four pairwise orthogonal rows, cannot be part of any $6\times 6$ complex Hadamard matrix (here $\mathbf{i}\in\mathbb{C}$ denotes the complex imaginary unit). Indeed, assume that $W^T$ forms the first four columns of a complex Hadamard matrix $H$. We calculate $\mathcal{H}((1,2,3),(1,2,3,4))=-4(2\sqrt{2}+\mathbf{i})/9\neq 0$. This contradicts Lemma~\ref{L1}.
\end{example}
\section{Cancelling rows and regular Hadamard matrices}\label{SECPRE2}
In this section we loosely follow the terminology of \cite{B}.
\begin{defi}
Let $R$ be a complex unimodular $2\times 6$ matrix, and assume that its rows are complex orthogonal. We call the pair of rows $\{r_1,r_2\}$ cancelling, if there exists column indices $p,q\in\{1,2,3,4,5,6\}$, such that $r_{1p}/r_{2p}+r_{1q}/r_{2q}=0$.
\end{defi}
This terminology is motivated by the following: if  $\{r_1,r_2\}$ is cancelling, then there exists a column permutation $\sigma=(\sigma_1,\sigma_2,\sigma_3,\sigma_4,\sigma_5,\sigma_6)$, such that: \[r_{1\sigma_1}/r_{2\sigma_1}+r_{1\sigma_2}/r_{2\sigma_2}=r_{1\sigma_3}/r_{2\sigma_3}+r_{1\sigma_4}/r_{2\sigma_4}=r_{1\sigma_5}/r_{2\sigma_5}+r_{1\sigma_6}/r_{2\sigma_6}=0,\]
that is, the six terms in the inner product $\left\langle r_1,r_2\right\rangle$ pairwise cancel each other out. Consequently, the cancelling property is invariant up to equivalence. Furthermore, the following polynomial captures whether the rows $\{h_x,h_y\}$ of $H$ are cancelling ($x,y\in\{1,2,3,4,5,6\}$ are distinct):
\begin{equation}\label{Cxy}
\mathcal{C}((x,y)):=\prod_{q=2}^6(h_{x1}h_{yq}+h_{xq}h_{y1}).
\end{equation}
Indeed, this is a simple consequence of Lemma~\ref{L22}. We have the following.
\begin{lemma}\label{LCxy}
Let $H=[h_{uv}]$ be a complex Hadamard matrix of order $6$. Let $i$, $j\in\{1,2,3,4,5,6\}$ be distinct. The rows $\{h_i,h_j\}$ are cancelling if and only if $\mathcal{C}((i,j))=\mathcal{C}((j,i))=0$.
\end{lemma}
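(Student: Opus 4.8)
The plan is to reduce the statement to a purely algebraic fact about the six unimodular quotients occurring in the inner product $\left\langle h_i,h_j\right\rangle$, and then to apply Lemma~\ref{L22}. Write $t_v:=h_{iv}/h_{jv}$ for $v\in\{1,\dots,6\}$; since every entry of $H$ is unimodular, each $t_v$ is a well-defined unimodular number, and orthogonality together with Lemma~\ref{L0} gives $\sum_{v=1}^6 t_v=0$. By definition, $\{h_i,h_j\}$ is cancelling precisely when $t_p+t_q=0$ for some pair of distinct columns $p,q$, so the task is to match this with the vanishing of the polynomials in \eqref{Cxy}.

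First I would rewrite $\mathcal{C}((i,j))$ in terms of the $t_v$. Dividing each factor $h_{i1}h_{jq}+h_{iq}h_{j1}$ by the nonzero quantity $h_{j1}h_{jq}$ produces $t_1+t_q$, whence
\[\mathcal{C}((i,j))=\left(\prod_{q=2}^6 h_{j1}h_{jq}\right)\prod_{q=2}^6 (t_1+t_q).\]
As the prefactor is unimodular, $\mathcal{C}((i,j))=0$ if and only if $t_1+t_q=0$ for some $q\in\{2,\dots,6\}$. The same manipulation applied to $\mathcal{C}((j,i))$, now dividing by $h_{i1}h_{iq}$, yields the factors $t_1^{-1}+t_q^{-1}=(t_1+t_q)/(t_1t_q)$, so $\mathcal{C}((j,i))=0$ under exactly the same condition. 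In particular the two vanishing conditions are equivalent, and it suffices to characterize $\mathcal{C}((i,j))=0$.

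It then remains to show that the condition ``$t_1+t_q=0$ for some $q\geq 2$'' is equivalent to the cancelling property. One direction is immediate: if $t_1=-t_q$, then the column pair $(1,q)$ witnesses cancelling. For the converse, suppose $t_p+t_q=0$ for some distinct $p,q$. If one of $p,q$ equals $1$ we are done; otherwise $p,q\geq 2$, and subtracting the relation $t_p+t_q=0$ from $\sum_v t_v=0$ leaves four unimodular numbers, one of which is $t_1$, summing to zero. By Lemma~\ref{L22} these four decompose into two cancelling pairs, so in particular $t_1+t_{q'}=0$ for some $q'\notin\{p,q\}$, giving $\mathcal{C}((i,j))=0$.

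The main obstacle is exactly this last bridging step. The definition of cancelling permits an \emph{arbitrary} cancelling pair of columns, whereas the polynomial $\mathcal{C}$ only records cancellations that involve the distinguished first column. The structural input of Lemma~\ref{L22}, namely that four unimodular numbers summing to zero necessarily split into two cancelling pairs, is precisely what guarantees that the cancellation can always be rearranged so that the first column participates, thereby closing the gap.
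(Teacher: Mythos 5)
Your proof is correct and follows essentially the same route as the paper: the direction $\mathcal{C}((i,j))=0\Rightarrow$ cancelling is the same direct computation, and your converse --- removing the cancelling pair $t_p+t_q=0$ from $\sum_v t_v=0$ and applying Lemma~\ref{L22} to the remaining four terms so that $t_1$ acquires a partner --- is exactly the paper's partition of the two rows into three disjoint $2\times 2$ complex Hadamard blocks, one of which must contain column $1$. Your explicit factorization of $\mathcal{C}((i,j))$ and the observation that $\mathcal{C}((j,i))$ vanishes under the identical condition (via $t_1^{-1}+t_q^{-1}=(t_1+t_q)/(t_1t_q)$) is a small tidy addition that the paper leaves implicit.
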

\begin{proof}
Assume that $\mathcal{C}((i,j))=0$. Then, there exists $q\in\{2,3,4,5,6\}$, such that $h_{i1}h_{jq}+h_{iq}h_{j1}=0$. Since $|h_{uv}|=1$, this is equivalent to $h_{i1}/h_{j1}+h_{iq}/h_{jq}=0$. Thus, the pair of rows $\{h_i,h_j\}$ is cancelling.

Conversely, assume that the pair of rows $\{h_i,h_j\}$ is cancelling. Then, using Lemma~\ref{L22}, these rows may be partitioned (after suitable column permutations) to three disjoint $2\times 2$ complex Hadamard matrices. Thus, in particular, there exists a column index $q\neq 1$, such that the $2\times 2$ matrix $\left[\begin{smallmatrix} h_{i1} & h_{iq}\\ h_{j1} & h_{jq}\end{smallmatrix}\right]$ is complex Hadamard. Thus $h_{i1}/h_{j1}+h_{iq}/h_{jq}=0$, consequently formula \eqref{Cxy} evaluated at $(i,j)$ is $0$.
\end{proof}
\begin{example}
The Di\c{t}\u{a} family \cite{D} with $|c|=1$:
\begin{equation}\label{D6c}
D_6(c)=\left[
\begin{array}{rrrrrr}
 1 & 1 & 1 & 1 & 1 & 1 \\
 1 & -1 & i & -i & -i c & i c \\
 1 & i & -1 & i & -i & -i \\
 1 & -i & i & -1 & i c & -i c \\
 1 & -i/c & -i & i/c & -1 & i \\
 1 & i/c & -i & -i/c & i & -1
\end{array}
\right],\ i^2+1=0,
\end{equation}
is a subfamily of $X_6$ (see \cite{SZ1}). It is easy to see that the rows of the matrix $D_6(c)$ are pairwise cancelling.
\end{example}
Matrices whose rows are pairwise cancelling are called regular Hadamard matrices, in the sense of \cite{B}. It is natural to try to classify all such matrices.
\begin{lemma}[Cf.~\mbox{\cite[Lemma~9.1]{B}}]\label{L2}
Let $R$ be a $3\times 6$ complex unimodular matrix with pairwise cancelling rows. Then, $R$ is a member of exactly one of the following two families of matrices, up to equivalence:
\[R_1(q)=\left[
\arraycolsep=4pt\begin{array}{rrrrrr}
1 & 1 & 1 & 1 & 1 & 1 \\
1 & \mathbf{i} & 1 & -\mathbf{i} & -1 & -1 \\
1 & -1 & -\mathbf{i} & \mathbf{i} & q & -q \\
\end{array}
\right],\quad R_2(q)=\left[
\arraycolsep=4pt\begin{array}{rrrrrr}
	1 & 1 & 1 & 1 & 1 & 1 \\
	1 & \mathbf{i} & -1 & -\mathbf{i} & q & -q \\
	1 & -1 & -q & \mathbf{i}q & -\mathbf{i}q & q \\
\end{array}
\right],\]
where $|q|=1$, and $\mathbf{i}$ is the complex imaginary unit.
\end{lemma}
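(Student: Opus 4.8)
The plan is to normalize $R$, convert the three pairwise cancelling conditions into a rigid block structure, cut down the finitely many combinatorial configurations using equivalence, and then solve the small residual polynomial systems. First I would use equivalence (column scalings, followed by rescaling the last two rows) to bring $R$ into a normalized form in which the first row and the first column consist entirely of $1$'s. Applying Lemma~\ref{L22} to the relation $\langle r_1,r_2\rangle=0$ together with the cancelling hypothesis for $\{r_1,r_2\}$ — which by Lemma~\ref{LCxy} is exactly the vanishing of $\mathcal{C}$ in \eqref{Cxy} — the six entries of $r_2$ split into three antipodal pairs $\{x,-x\}$, one of which is forced to be $\{1,-1\}$; the identical statement holds for $r_3$. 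Hence, after permuting columns, I may take the cancelling matching of $\{r_1,r_2\}$ to group the columns as $\{1,2\},\{3,4\},\{5,6\}$ and, after rescaling $r_2$, write $r_2=(1,-1,s,-s,t,-t)$ with $|s|=|t|=1$.

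Next I would record how the cancelling matching of $\{r_1,r_3\}$ sits relative to this one. Since the union of two perfect matchings on six points is a disjoint union of even cycles, and a $2$-cycle is precisely a column pair shared by both matchings, only a few possibilities survive up to the symmetries that preserve the blocks $\{1,2\},\{3,4\},\{5,6\}$ (permuting the three blocks and swapping the two columns inside a block): the two matchings coincide, they share exactly one block, or they share none (a single $6$-cycle). In each configuration the antipodal structure of $r_3$ imposed by the $\{r_1,r_3\}$-cancelling condition, combined with the $\{r_2,r_3\}$-cancelling condition, determines the pattern of which entries of $r_3$ are negatives of which, leaving only a handful of unimodular unknowns to resolve.

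For each configuration I would then impose the remaining relations $\mathcal{O}((i,j))=\mathcal{O}((j,i))=0$ of \eqref{ort} and $\mathcal{C}((i,j))=0$ of \eqref{Cxy}, using $\overline{h}_{uv}=1/h_{uv}$ as in Lemma~\ref{L0} to turn them into polynomial equations, and repeatedly using Lemma~\ref{L22} to replace each four-term vanishing sum by a product, which prunes the branching dramatically. Solving the resulting small systems — by hand, or mechanically by a Gr\"obner basis computation as in the rest of the paper — the rigid positions are pinned to fourth roots of unity, which is where the imaginary unit $\mathbf{i}$ enters, while exactly one unimodular parameter $q$ survives. A final equivalence brings the outcome of each case into one of the two normal forms $R_1(q)$ and $R_2(q)$ with $|q|=1$, which is the asserted classification.

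It then remains to see that the two families are genuinely different, i.e. that up to equivalence $R$ falls into exactly one of them. The natural separating invariant is the presence of a rank-one $2\times 2$ submatrix, that is, indices with $h_{xp}h_{yq}=h_{xq}h_{yp}$: this condition is preserved by row and column scalings and by permutations, and it is satisfied by $R_1(q)$ (its first two rows contain repeated columns) but not by a generic $R_2(q)$. I expect the main obstacle to lie precisely here and in the preceding case analysis: the configurations in which some row repeats a value make the cancelling matching non-unique, and it is exactly this degenerate branch that produces the $R_1$ family and demands the most careful bookkeeping, whereas the non-degenerate branch leading to $R_2$ is uniform. Establishing that the enumeration of configurations is exhaustive and that no degenerate case sneaks a member of one family into the other is the delicate part of the argument.
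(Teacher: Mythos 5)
Your route is genuinely different from the paper's. The paper does not reprove the classification at all: it imports it wholesale from \cite[Lemma~9.1]{B}, merely observing that the four families listed there collapse, after a change of variable, to $R_1(q)$ and $R_2(q)$ up to equivalence, and then verifying inequivalence of the two families by a symbolic computation of the Haagerup invariant set. Your plan --- normalize, split each row into antipodal pairs via Lemma~\ref{L22}, organize the cases by the cycle structure of the union of the two perfect matchings (three $2$-cycles, a $2$-cycle plus a $4$-cycle, or a $6$-cycle), then solve the residual polynomial systems --- is the correct skeleton for a self-contained proof, essentially reconstructing the argument the paper outsources. It is also consistent with the actual answer: the three cancelling matchings of $R_2(q)$ are pairwise edge-disjoint (the generic branch), while $R_1(q)$ has repeated entries in its rows, non-unique matchings and shared edges, exactly the degenerate branch you predicted would produce $R_1$. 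What your version buys is independence from \cite{B}; what it costs is that the computational core --- the claim that the rigid entries get pinned to fourth roots of unity with a single surviving unimodular parameter --- is asserted, not executed, and the enumeration must track the compatible \emph{triple} of matchings (the $\{r_2,r_3\}$ matching interacts with the other two), not just the relative position of two of them. These are finite, Gr\"obner-amenable tasks, but they constitute the entire content of the lemma, so as written this is a plan rather than a proof.

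On the ``exactly one'' part, your instinct that this is the delicate point is more right than you know: your rank-one $2\times 2$ invariant cannot be patched at the special values of $q$, because the two families genuinely intersect there. Taking the columns of $R_1(1)$ in the order $(1,2,6,4,3,5)$ yields precisely $R_2(1)$: the permuted second row becomes $(1,\mathbf{i},-1,-\mathbf{i},1,-1)$ and the permuted third row becomes $(1,-1,-1,\mathbf{i},-\mathbf{i},1)$, matching $R_2(q)$ at $q=1$ entry by entry. Hence $R_1(1)$ and $R_2(1)$ are equivalent, no equivalence invariant whatsoever can separate the families at that point, and disjointness can only be a generic statement. Note that the paper's own verification --- a symbolic Haagerup invariant computation with $q$ an indeterminate --- likewise establishes only that the two one-parameter families are distinct as families, not pointwise disjoint, so the lemma's literal ``exactly one'' must be read with this caveat. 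Your hesitation at this step is therefore vindicated; the honest conclusion available to either your argument or the paper's is that every $R$ with pairwise cancelling rows is equivalent to some $R_1(q)$ or $R_2(q)$, with the two families inequivalent for generic $q$ but coinciding at special parameter values such as $q=1$.
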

\begin{proof}
It is not too hard to see that the first three families listed in \cite{B} are equivalent to $R_1(q)$, while the fourth one is equivalent to $R_2(q)$, after changing the variable $q$ if it is necessary. Further, one may compute the Haagerup invariant set \cite{UH} to symbolically verify that these two families are indeed inequivalent. We spare the reader the details.
\end{proof}
One of the main results of \cite{B} is the complete characterization of $6\times 6$ regular complex Hadamard matrices.
\begin{theorem}[See \cite{B}]\label{TBanica}
A complex Hadamard matrix $H$ of order $6$ belongs to the Di\c{t}\u{a} family \eqref{D6c} if and only if it is regular, that is, every pair of rows of it is cancelling.
\end{theorem}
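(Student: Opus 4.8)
The forward (``only if'') implication requires essentially no work beyond what is already recorded: by Lemma~\ref{LCxy} it suffices to verify that $\mathcal{C}((i,j))=\mathcal{C}((j,i))=0$ for every pair of rows of $D_6(c)$, and indeed each off-diagonal ratio vector $(h_{iv}/h_{jv})_v$ of $D_6(c)$ splits into three antipodal pairs $\{t,-t\}$ for every $|c|=1$, which is precisely the cancelling property asserted in the Example following Lemma~\ref{LCxy}. So the substance of the theorem lies in the converse.

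For the ``if'' direction, suppose $H$ is a regular complex Hadamard matrix of order $6$; the goal is to show that $H$ is equivalent to some $D_6(c)$. Since both the Hadamard property and the cancelling property are preserved under equivalence, I would normalize $H$ and then apply further equivalence operations freely. Consider the $3\times 6$ submatrix formed by the first three rows $h_1,h_2,h_3$. These are complex unimodular and, by regularity, pairwise cancelling; hence by Lemma~\ref{L2} a suitable equivalence (column permutations, unimodular column rescalings, and a possible reordering of these three rows) brings them to exactly one of $R_1(q)$ or $R_2(q)$ with $|q|=1$. Applying this transformation to the whole of $H$, the problem reduces to extending a fixed top block $R_1(q)$ (respectively $R_2(q)$) to a full regular Hadamard matrix, and then checking that every such completion is equivalent to a member of the Di\c{t}\u{a} family.

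The extension is governed by a polynomial system. Writing the entries of $h_4,h_5,h_6$ as unimodular unknowns with first entry $1$ (fifteen unknowns in total), I would impose the orthogonality relations $\mathcal{O}((x,y))=\mathcal{O}((y,x))=0$ of Lemma~\ref{L0} and the cancelling relations $\mathcal{C}((x,y))=\mathcal{C}((y,x))=0$ of Lemma~\ref{LCxy} for each of the twelve new pairs of rows, together with the unimodularity relations $h_{uv}\overline{h}_{uv}=1$ (treating $\overline{h}_{uv}$ as the reciprocal $1/h_{uv}$). A useful way to keep the computation tractable is to first apply Lemma~\ref{L2} to the triples $\{h_1,h_2,h_4\}$, $\{h_1,h_2,h_5\}$, $\{h_1,h_2,h_6\}$, whose shared top two rows are already fixed: this strongly constrains each additional row before one invokes the full orthogonality system. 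In each of the two cases the result is a zero-dimensional ideal whose variety I would compute by Gr\"obner basis techniques, read off the finitely many admissible completions as functions of the parameter $q$, and finally exhibit an explicit equivalence carrying each completion to $D_6(c)$ for an appropriate $c=c(q)$, thereby also pinning down the relation between the two parameters.

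The main obstacle is twofold. First, the systems carry the free unimodular parameter $q$ alongside the fifteen entries, so a careful choice of monomial ordering and elimination order is needed both to keep the Gr\"obner computation feasible and to discard spurious components arising from degenerate (non-unimodular or coincident-column) solutions. Second, and more conceptually, one must establish completeness: that every solution branch in \emph{both} the $R_1$ and the $R_2$ case is equivalent to a genuine $D_6(c)$, with no sporadic regular matrix escaping the family, and in particular determine whether both canonical forms actually admit extensions or whether one is subsumed in the other after reparametrization. The book-keeping of the residual equivalence freedom left after the top block is fixed --- essentially the permutations of the last three rows --- is what makes this final matching delicate rather than routine.
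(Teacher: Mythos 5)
First, a structural point: the paper does not prove this statement at all --- it is imported verbatim from \cite{B}, as the bracketed attribution indicates, and everything Section~\ref{SECPRE2} actually proves (Propositions~\ref{PC1} and \ref{PC2}, Theorem~\ref{TTriangle}) is a \emph{strengthening} built on top of it. So there is no internal proof to compare against, and your attempt has to be judged as a free-standing reproof of the cited result. Your forward direction is fine (it is exactly the paper's Example on $D_6(c)$). Your converse is an outline in the right spirit: reduce the first three rows to $R_1(q)$ or $R_2(q)$ via Lemma~\ref{L2}, then close the system with orthogonality plus cancelling constraints and solve by Gr\"obner bases --- methodologically the same kind of computation the paper carries out for Theorem~\ref{TTriangle}. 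One choice you made deserves credit: you \emph{impose} the cancelling relations $\mathcal{C}((x,y))=\mathcal{C}((y,x))=0$ on the new rows, which is legitimate since regularity is a hypothesis here, and which sidesteps the trap exposed by the matrix $W$ in \eqref{W} --- for $R_2(q)$, orthogonality alone does not force cancelling, so a version of your plan that tried to derive rather than impose it would fail without the Haagerup conditions of Lemma~\ref{L1}.

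The gap is that your proof defers its entire substance to computations you do not perform. The assertion that each of the two systems is consistent exactly on branches equivalent to $D_6(c)$ \emph{is} the theorem; saying one ``would compute'' the variety and ``read off'' the completions establishes nothing, and your closing paragraph concedes that the two unresolved points (discarding non-unimodular components with the free parameter $q$ in play, and the completeness of the branch-by-branch matching to $D_6(c)$ modulo the residual row-permutation freedom) carry the actual mathematical content. There is also a concrete technical slip: you cannot apply Lemma~\ref{L2} to the triples $\{h_1,h_2,h_4\}$, $\{h_1,h_2,h_5\}$, $\{h_1,h_2,h_6\}$ in the way you suggest, because the lemma's normal form is reached only after column permutations and unimodular scalings, and those moves would destroy the already-fixed block $R_1(q)$ or $R_2(q)$ on rows $1$--$3$; each further triple only tells you it is \emph{equivalent to some} $R_i(q')$, which is far weaker than pinning down its shape in the fixed coordinates. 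Finally, note that within the paper the converse would be a one-line consequence of Theorem~\ref{TTriangle} (a regular matrix trivially contains a triplet of pairwise cancelling rows), but invoking that here would be circular, since the proof of Theorem~\ref{TTriangle} itself cites Theorem~\ref{TBanica}; your implicit avoidance of that circularity is correct.
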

In what follows we strengthen this result, and prove that if $H$ contains only a triplet of pairwise cancelling rows, then $H$ necessarily belongs to the Di\c{t}\u{a} family.
\begin{proposition}\label{PC1}
Consider the matrix $R_1(q_0)$ for some fixed complex unimodular number $q_0$. If $s$ is a row with unimodular entries, which is pairwise orthogonal to all three rows of $R_1(q_0)$, then the pairs $\{r,s\}$ are cancelling for every row $r$ of $R_1(q_0)$.
\end{proposition}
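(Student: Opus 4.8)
The plan is to treat the entries of $s=(s_1,\dots,s_6)$ as unknowns on the unit circle and to reduce the cancelling assertion to a membership question for the polynomials of Sections~\ref{SECPRE1}--\ref{SECPRE2}. Writing $u_k:=1/s_k=\overline{s_k}$, the three orthogonality conditions $\langle r_i,s\rangle=0$ (that is, $\mathcal{O}((i,4))=0$ after appending $s$ as a fourth row) become three linear equations in $u_1,\dots,u_6$ whose coefficient matrix is precisely $R_1(q_0)$. By Lemma~\ref{LCxy}, the pair $\{r_i,s\}$ is cancelling exactly when the associated polynomials satisfy $\mathcal{C}((i,4))=\mathcal{C}((4,i))=0$; and by Lemma~\ref{L22} it suffices to produce, on every solution, a single pair of columns $p,q$ for which the two terms $r_{ip}u_p$ and $r_{iq}u_q$ of the inner product $\langle r_i,s\rangle$ cancel. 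Thus the whole proposition reduces to showing that each cancelling polynomial vanishes on the variety cut out by the orthogonality relations together with the unimodularity relations $s_k\overline{s_k}=1$.

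First I would exploit a structural simplification to isolate the role of the parameter $q_0$. Since $r_2=-r_1$ in columns $5$ and $6$, adding the first two orthogonality equations eliminates $u_5$ and $u_6$ and yields the $q_0$-free relation $2u_1+(1+\mathbf{i})u_2+2u_3+(1-\mathbf{i})u_4=0$, involving only the ``fourth-root'' columns $1,2,3,4$. Imposing $|u_1|=\dots=|u_4|=1$ turns this into a closing-quadrilateral condition that pins the mutual ratios of $u_1,\dots,u_4$ down to finitely many one-parameter branches; substituting these back into the two remaining orthogonality equations then solves for $u_5,u_6$ in terms of the parameter $q_0$. This organizes the solution set into a small number of explicit families.

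On each such branch I would then exhibit a cancelling pair for each of $r_1,r_2,r_3$ directly -- for instance, in a representative branch one finds relations such as $u_4=-u_1$, $u_2=-u_3$ and $u_6=-u_5$, which immediately display the pairings certifying that $\{r_1,s\}$, $\{r_2,s\}$ and $\{r_3,s\}$ are all cancelling. Equivalently, the entire argument can be phrased as a single radical-membership check verified by a Gr\"obner basis of the ideal generated by the orthogonality and unimodularity relations (with $q_0$ carried along as a unimodular parameter), confirming that $\mathcal{C}((i,4))$ and $\mathcal{C}((4,i))$ reduce to zero for $i=1,2,3$. I expect the main obstacle to be precisely the bookkeeping forced by unimodularity and the parameter: one must introduce the conjugate variables $\overline{s_k}$ with their defining relations (roughly doubling the number of indeterminates), carry $q_0$ through the elimination, and make sure every component of the solution variety is accounted for, so that no genuine unimodular solution escapes the cancelling conclusion and no spurious branch violating $|s_k|=1$ contaminates the computation.
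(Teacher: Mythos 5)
Your fallback formulation---encode orthogonality and unimodularity polynomially, carry $q_0$ along as a variable, and certify by a Gr\"obner basis that the cancelling polynomials vanish on the resulting variety---is essentially the paper's own proof: the authors run precisely the Rabinowitsch form of your radical-membership test, adjoining $uabcdeq\cdot\mathcal{C}((4,j))+1=0$ to the twelve bidirectional orthogonality polynomials $\mathcal{O}((x,y))$ and finding the reduced Gr\"obner basis $\{1\}$ for each $j\in\{1,2,3\}$, with the one economy that instead of doubling variables via $s_k\overline{s}_k=1$ they encode conjugates through the reversed polynomials $\mathcal{O}((y,x))$ (i.e.\ $\overline{s}_k=1/s_k$, cf.\ Remark~\ref{newremark}), the auxiliary $u$ simultaneously forcing all entries and $\mathcal{C}$ nonzero. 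One caution: your primary narrative---the quadrilateral relation $2u_1+(1+\mathbf{i})u_2+2u_3+(1-\mathbf{i})u_4=0$ followed by a branch-by-branch exhibition of cancelling pairs---is only a sketch, since the claimed classification into finitely many one-parameter branches and the ``representative'' relations $u_4=-u_1$, $u_2=-u_3$, $u_6=-u_5$ are asserted rather than derived (and the unequal side lengths $2,\sqrt{2},2,\sqrt{2}$ mean Lemma~\ref{L22} does not apply to pin the ratios), so on its own that route is not yet a proof; it is your Gr\"obner check that carries the argument, exactly as in the paper.
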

\begin{proof}
Assume that the following matrix (see $R_1(q)$ in Lemma~\ref{L2}):
\[R=\left[
\begin{array}{rrrrrr}
1 & 1 & 1 & 1 & 1 & 1 \\
1 & i & 1 & -i & -1 & -1 \\
1 & -1 & -i & i & q & -q \\
1 & a & b & c & d & e
\end{array}
\right],\ i^2+1=0,\]
forms the first four rows of a complex Hadamard matrix $H$. We will show that if the fourth row $r_4$ is complex orthogonal to the first three rows $r_j$, $j\in\{1,2,3\}$ of $R$ then all the pairs $\{r_4,r_j\}$ are necessarily cancelling. 

In order to prove that the pair of rows $\{r_4,r_j\}$ is indeed cancelling, we consider the following system of $14$ complex polynomial equations (see Remark~\ref{newremark}) in the variables $a,b,c,d,e,i,q,u$ with rational coefficients:
\[\begin{cases}
\mathcal{O}((x,y))=0,\ (x,y)\in\{1,2,3,4\}^2,\ x\neq y\\
i^2+1=0,\\
uabcdeq\cdot \mathcal{C}((4,j))+1=0,
\end{cases}\]
describing the situation when the pair $\{r_j,r_4\}$ is not cancelling. The auxiliary variable $u$ ascertains that the variables as well as $\mathcal{C}((4,j))$ are nonzero. For each $j\in\{1,2,3\}$ we compute a reduced Gr\"obner basis within a few seconds, which turns out to be $\{1\}$. Hilbert's Nullstellensatz then implies that these system of equations have no solutions. Therefore if the rows of $R$ are complex orthogonal, then they are necessarily pairwise cancelling.
\end{proof}
Rather remarkably, the matrix $W$ in \eqref{W}, whose first three rows coincide with $R_2((1-\mathbf{i})/\sqrt{2})$, shows that Proposition~\ref{PC1} does not generalize to the family $R_2(q)$. Indeed, $\{w_1,w_4\}$ is not cancelling, which can be readily seen by evaluating $\mathcal{C}((1,4))\neq0$. However, once we restrict the statement to submatrices of $6\times 6$ Hadamard matrices (in particular subject to the additional vanishing conditions described in Lemma~\ref{L1}), then the cancelling property automatically follows from orthogonality.
\begin{proposition}\label{PC2}
Consider the matrix $R_2(q_0)$ for some fixed complex unimodular number $q_0$. If $s$ is a row with unimodular entries, which is pairwise orthogonal to all three rows of $R_2(q_0)$, and the rows and columns of the augmented matrix $(r_1,r_2,r_3,s)$ satisfy the vanishing conditions of Lemma~\ref{L1}, then the pair of rows $\{r_j,s\}$ is cancelling for each $j\in\{1,2,3\}$.
\end{proposition}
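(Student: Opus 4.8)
The plan is to mirror the Gröbner basis argument used in the proof of Proposition~\ref{PC1}, augmenting the matrix $R_2(q)$ with a general fourth row and translating orthogonality, the Haagerup vanishing conditions, and the \emph{negated} cancelling property into a polynomial system whose emptiness I verify by computing a reduced Gröbner basis. First I would set up the augmented matrix
\[
R=\left[\begin{array}{rrrrrr}
1 & 1 & 1 & 1 & 1 & 1\\
1 & i & -1 & -i & q & -q\\
1 & -1 & -q & iq & -iq & q\\
1 & a & b & c & d & e
\end{array}\right],\ i^2+1=0,
\]
whose first three rows are $R_2(q)$ and whose fourth row $r_4=s$ carries the unknowns $a,b,c,d,e$. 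As in Proposition~\ref{PC1}, I encode pairwise orthogonality by the twelve equations $\mathcal{O}((x,y))=0$ for ordered distinct $(x,y)\in\{1,2,3,4\}^2$, together with $i^2+1=0$, and I encode that the pair $\{r_j,r_4\}$ \emph{fails} to be cancelling by the auxiliary equation $u\,abcdeq\cdot\mathcal{C}((4,j))+1=0$, the variable $u$ forcing all the listed quantities, in particular $\mathcal{C}((4,j))$, to be nonzero.

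The crucial new ingredient, which is exactly what separates Proposition~\ref{PC2} from Proposition~\ref{PC1}, is that orthogonality alone is insufficient here: the matrix $W$ of \eqref{W} exhibits a genuinely non-cancelling orthogonal configuration over $R_2((1-\mathbf{i})/\sqrt 2)$. I would therefore adjoin to the system the vanishing conditions of Lemma~\ref{L1}. For each choice of three of the four rows and four of the six columns I add $\mathcal{H}((\cdot,\cdot,\cdot),(\cdot,\cdot,\cdot,\cdot))=0$ together with its reverse-orientation companion $\mathcal{H}((\cdot,\cdot,\cdot),(\cdot,\cdot,\cdot,\cdot))=0$ in the opposite row order, and likewise the transposed analogues coming from the $4\times 3$ submatrices of $R$ (that is, the conditions satisfied by the columns). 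These are precisely the relations that $W$ violates, since $\mathcal{H}((1,2,3),(1,2,3,4))\neq 0$ for $W^T$, so they carve out the spurious orthogonal-but-non-cancelling solutions.

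With this enlarged system in the variables $a,b,c,d,e,i,q,u$ with rational coefficients, I would, for each $j\in\{1,2,3\}$ separately, compute a reduced Gröbner basis and expect it to be $\{1\}$. Hilbert's Nullstellensatz then shows the system has no solution, so no orthogonal $s$ satisfying the Lemma~\ref{L1} conditions can leave $\{r_j,s\}$ non-cancelling; hence every such pair is cancelling, as claimed. The main obstacle I anticipate is not the Nullstellensatz step but the selection of a \emph{sufficient} family of Haagerup relations: one must include enough of the $\mathcal{H}=0$ constraints, and their transposes, to eliminate the $W$-type solutions while keeping the Gröbner basis computation tractable, since, unlike in Proposition~\ref{PC1}, the bare orthogonality ideal does not collapse to the unit ideal.
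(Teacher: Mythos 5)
Your proposal takes essentially the same approach as the paper: the same augmented matrix $R$, the same twelve orthogonality equations $\mathcal{O}((x,y))=0$ together with $i^2+1=0$ and the negated-cancelling equation $u\,abcdeq\cdot\mathcal{C}((4,j))+1=0$, followed by a reduced Gr\"obner basis computation for each $j\in\{1,2,3\}$ and an appeal to Hilbert's Nullstellensatz. The only difference is economy: where you adjoin the full family of row and column Haagerup relations, the paper adjoins a single one, namely $\mathcal{H}((1,2,3),(1,2,3,4))=0$ for $R^T$ (the leading $4\times 3$ submatrix), giving $15$ equations and a roughly one-minute computation per case; since that smaller system is a subsystem of yours and already generates the unit ideal, your enlarged system is guaranteed to reduce to $\{1\}$ as well, merely at greater computational cost.
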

\begin{proof}
The proof is very similar to the proof of Proposition~\ref{PC1}. Assume that the following matrix (see $R_2(q)$ in Lemma~\ref{L2}):
\[R=\left[
\begin{array}{rrrrrr}
1 & 1 & 1 & 1 & 1 & 1 \\
1 & i & -1 & -i & q & -q \\
1 & -1 & -q & iq & -iq & q \\
1 & a & b & c & d & e
\end{array}
\right],\ i^2+1=0,\]
forms the first four rows of a complex Hadamard matrix $H$. Then we consider the system of orthogonality equations \eqref{ort} together with the Haagerup polynomial \eqref{POLYH} associated to the leading $4\times 3$ submatrix of $R$ in the variables $a,b,c,d,e,i,q,u$:
\[\begin{cases}
\mathcal{O}((x,y))=0,\ (x,y)\in\{1,2,3,4\}^2,\ x\neq y\\
\mathcal{H}((1,2,3),(1,2,3,4))=0,\ \text{for $R^T$}\\
i^2+1=0,\\
uabcdeq\cdot \mathcal{C}((4,j))+1=0.
\end{cases}\]
This system of $15$ polynomial equations describes the situation where the pair of rows $\{r_j,r_4\}$ is not cancelling (where $j\in\{1,2,3\}$ is fixed). Similarly as before, three $1$-minute Gr\"obner basis calculations show that these systems have no solutions. Therefore $\{r_j,r_4\}$ is necessarily cancelling for each $j\in\{1,2,3\}$.
\end{proof}
The main result of this section is the following.
\begin{theorem}\label{TTriangle}
A complex Hadamard matrix $H$ of order $6$ belongs to the Di\c{t}\u{a} family \eqref{D6c} if and only if it has a triplet of pairwise cancelling rows.
\end{theorem}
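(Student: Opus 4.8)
The plan is to deduce the theorem from the characterization of regular matrices in Theorem~\ref{TBanica}, so that it suffices to upgrade ``one triplet of pairwise cancelling rows'' to ``every pair of rows is cancelling''. The forward implication is immediate: if $H$ lies in the Di\c{t}\u{a} family \eqref{D6c}, then by Theorem~\ref{TBanica} it is regular, so \emph{all} pairs of its rows are cancelling, and in particular any three rows form a pairwise cancelling triplet. The substance is therefore the converse.

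First I would isolate the following propagation step. Suppose $\{a,b,c\}$ are three pairwise cancelling rows of $H$ and $d$ is any further row. Since cancelling, orthogonality, and the Haagerup vanishing conditions are all invariant under the equivalence operations, and these operations act on all rows of $H$ simultaneously, I may apply an equivalence carrying $\{a,b,c\}$ into one of the normal forms of Lemma~\ref{L2} while keeping $d$ unimodular and orthogonal to $a,b,c$. If the normal form is $R_1(q)$, Proposition~\ref{PC1} shows that $d$ is cancelling with each of $a,b,c$. If it is $R_2(q)$, the same conclusion follows from Proposition~\ref{PC2}, whose extra hypothesis---the vanishing conditions of Lemma~\ref{L1} for the augmented matrix $(a,b,c,d)$---holds automatically, because $a,b,c,d$ are four rows of the genuine $6\times 6$ Hadamard matrix $H$, so Lemma~\ref{L1} applies to their $3\times 4$ subconfigurations (and, via $H^T$, to the transposed ones used in Proposition~\ref{PC2}). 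Thus in either case $\{a,d\}$, $\{b,d\}$, $\{c,d\}$ are all cancelling.

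With this step in hand the converse is a short bootstrap. After a permutation of rows, assume rows $1,2,3$ of $H$ are pairwise cancelling. Applying the propagation step to the triplet $\{1,2,3\}$ and each of the rows $4,5,6$ shows that every pair meeting $\{1,2,3\}$ is cancelling; only the three pairs inside $\{4,5,6\}$ remain. The triplet $\{1,2,4\}$ is now pairwise cancelling, so propagating it against rows $5$ and $6$ makes $\{4,5\}$ and $\{4,6\}$ cancelling; and the triplet $\{1,2,5\}$ is pairwise cancelling, so propagating it against row $6$ makes $\{5,6\}$ cancelling. Hence all $\binom{6}{2}=15$ pairs of rows of $H$ are cancelling, i.e.\ $H$ is regular, and Theorem~\ref{TBanica} places $H$ in the Di\c{t}\u{a} family.

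The main obstacle is conceptual rather than computational, since the hard Gr\"obner-basis work is already absorbed into Propositions~\ref{PC1} and \ref{PC2}. The delicate points to get right are (i) that reducing a cancelling triplet to $R_1(q)$ or $R_2(q)$ via equivalence does not disturb orthogonality of the fourth row nor the cancelling property one wants to conclude, and (ii) that the auxiliary Haagerup hypothesis needed to invoke Proposition~\ref{PC2} is free of charge for any four rows drawn from an honest $6\times 6$ Hadamard matrix. Once these are secured, the combinatorial propagation over triplets closes the argument with no further casework.
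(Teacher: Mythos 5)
Your proposal is correct and takes essentially the same route as the paper's own proof: reduce the cancelling triplet to a normal form of Lemma~\ref{L2}, propagate the cancelling property to each further row via Propositions~\ref{PC1} and \ref{PC2} (the Haagerup hypothesis of the latter being automatic for four rows of a genuine $6\times 6$ Hadamard matrix, also in the transposed form via $H^T$), and bootstrap over the triplets $\{1,2,3\}$, $\{1,2,4\}$, $\{1,2,5\}$ until all pairs are cancelling, whence Theorem~\ref{TBanica} applies. Your write-up is in fact slightly more explicit than the paper's terse version about why the equivalence normalizing the triplet preserves orthogonality, unimodularity, and the cancelling property for the remaining rows.
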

\begin{proof}
We may assume, using Lemma~\ref{L2}, that the first three rows of $H$ are either $R_1(q_0)$ or $R_2(q_0)$ for some fixed complex unimodular number $q_0$. Let us denote the rows of $H$ by $r_i$, $i\in\{1,2,3,4,5,6\}$. By Propositions \ref{PC1} and \ref{PC2}, $\{r_i,r_4\}$ is cancelling for $i\in\{1,2,3\}$, thus the first four rows are pairwise cancelling. Analogously, $\{r_i,r_5\}$ is cancelling for $i\in\{1,2,3\}$, and $i\in\{1,2,4\}$, hence the first five rows are pairwise cancelling. Analogously, all six rows are pairwise cancelling. Therefore, referring to Theorem~\ref{TBanica}, $H$ belongs to the Di\c{t}\u{a} family.
\end{proof}
\section{The case of three rows: the 2-circulant family}\label{S2COLS}
In this section we consider $6\times 6$ complex Hadamard matrices having a submatrix:
\begin{equation}\label{diagonalpattern}\left[\begin{array}{rrrr}1 &1&1&1\\
1 & -1 &\ast &\ast\\
1 & \ast & -1 & \ast\\
1 & \ast & \ast &-1\end{array}\right].
\end{equation}
That is, the matrix has three distinct columns, each containing a $-1$ entry, which are furthermore located in three distinct rows. There have been notable previous efforts studying matrices with similar properties \cite{BN}, \cite{MSZ}, \cite{LEE}.
\begin{proposition}\label{P51}
Let $H$ be a complex Hadamard matrix of order $6$, such that its upper-left $4\times 4$ submatrix has the following form:
\[M=\left[\begin{array}{rrrr}
1 & 1 & 1 & 1\\
1 & -1 & a & b\\
1 & c & -1 & d\\
1 & e & f & -1\end{array}\right],\]
where $(a+b)(c+d)(e+f)(c+e)(a+f)(b+d)=0$. Then $H$ belongs to the Di\c{t}\u{a} family \eqref{D6c}.
\end{proposition}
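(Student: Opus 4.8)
The plan is to reduce everything to Theorem~\ref{TTriangle}: it suffices to exhibit, in each of the six cases distinguished by the hypothesis $(a+b)(c+d)(e+f)(c+e)(a+f)(b+d)=0$, a triplet of pairwise cancelling rows (or columns) of $H$. Denote the rows of $H$ by $h_1,\dots,h_6$. Since the first row of $M$ is constant, $h_1$ is automatically cancelling with each of $h_2,h_3,h_4$: indeed $h_{11}/h_{21}+h_{12}/h_{22}=1+(-1)=0$, and the pairs $\{h_1,h_3\}$ and $\{h_1,h_4\}$ cancel through columns $\{1,3\}$ and $\{1,4\}$ using the $-1$ entries on the diagonal of $M$. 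Thus, to produce a pairwise cancelling triplet containing $h_1$, it is enough to make one of $\{h_2,h_3\}$, $\{h_2,h_4\}$, $\{h_3,h_4\}$ cancelling.

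First I would dispose of the three factors $b+d$, $a+f$, $c+e$ directly, by a one-line computation with the definition of cancelling:
\[
\frac{h_{21}}{h_{31}}+\frac{h_{24}}{h_{34}}=1+\frac{b}{d},\qquad
\frac{h_{21}}{h_{41}}+\frac{h_{23}}{h_{43}}=1+\frac{a}{f},\qquad
\frac{h_{31}}{h_{41}}+\frac{h_{32}}{h_{42}}=1+\frac{c}{e}.
\]
Hence $b+d=0$ makes $\{h_2,h_3\}$ cancelling (through columns $1,4$), $a+f=0$ makes $\{h_2,h_4\}$ cancelling (columns $1,3$), and $c+e=0$ makes $\{h_3,h_4\}$ cancelling (columns $1,2$). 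In each case the corresponding triplet $\{h_1,h_2,h_3\}$, $\{h_1,h_2,h_4\}$, or $\{h_1,h_3,h_4\}$ is pairwise cancelling, and Theorem~\ref{TTriangle} places $H$ in the Di\c{t}\u{a} family \eqref{D6c}.

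For the remaining three factors $a+b$, $c+d$, $e+f$ I would exploit transposition. The matrix $M^T$ has exactly the same shape as $M$, with the parameters $(a,b,c,d,e,f)$ replaced by $(c,e,a,f,b,d)$; under this substitution the product $(a+b)(c+d)(e+f)(c+e)(a+f)(b+d)$ is invariant, and the three remaining factors $a+b$, $c+d$, $e+f$ are precisely the three ``row'' factors of $M^T$ handled in the previous paragraph. Applying that argument to $H^T$ (also a complex Hadamard matrix) therefore yields a pairwise cancelling triplet of rows of $H^T$, so $H^T$ belongs to the Di\c{t}\u{a} family. A direct check gives $D_6(c)^T=D_6(1/c)$, so the family is closed under transposition, and consequently $H=(H^T)^T$ lies in it as well.

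The computations here are elementary and, in contrast to Propositions~\ref{PC1} and \ref{PC2}, require neither Gr\"obner bases nor the Haagerup conditions of Lemma~\ref{L1}: we never need to deduce cancelling from orthogonality, only to read it off from the vanishing of an explicit $2\times 2$ minor condition. The genuine content is the observation that the six factors in the hypothesis are exactly the six ``off-diagonal'' cancelling conditions among the three nonconstant rows and three nonconstant columns of $M$, split three-and-three by transposition. The one point that I expect to demand care — and which is the main, though minor, obstacle — is the transpose-closedness of the Di\c{t}\u{a} family, which is settled cleanly by the identity $D_6(c)^T=D_6(1/c)$.
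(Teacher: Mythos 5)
Your proof is correct and takes essentially the same route as the paper: exhibit a pairwise cancelling triplet of rows containing the constant row $h_1$ (or, for the factors $a+b$, $c+d$, $e+f$, of columns via $H^T$), invoke Theorem~\ref{TTriangle}, and close the transposed cases with $D_6^T(c)=D_6(1/c)$. The paper merely compresses your explicit six-case check into the single case $d=-b$ by appealing to the symmetries (permutations, relabelling, transposition) that you verify by hand.
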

\begin{proof}
We may assume, by applying suitable permutations, relabelling, and transposing $H$ if necessary, that $d=-b$. Consequently the first three rows of $H$ are pairwise cancelling. Therefore, by Theorem~\ref{TTriangle}, the matrix must belong to the Di\c{t}\u{a} family. As the Di\c{t}\u{a} family is closed with respect to transposition, indeed, $D_6^T(c)=D_6(1/c)$, we are done.
\end{proof}
In order to move forward, first we need the following technical result.
\begin{lemma}\label{L51}
Let $H$ be a complex Hadamard matrix of order $6$, such that its upper-left $4\times 4$ submatrix has the following form:
\[M=\left[\begin{array}{rrrr}
1 & 1 & 1 & 1\\
1 & -1 & a & b\\
1 & c & -1 & d\\
1 & e & f & -1\end{array}\right],\]
where $(a+b)(c+d)(e+f)(c+e)(a+f)(b+d)\neq 0$. Then $ac=be=df$.
\end{lemma}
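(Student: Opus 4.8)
The plan is to eliminate the six unknown entries of the last two columns, reduce to a polynomial system in $a,\dots,f$ alone, and then extract $ac=be$ and $be=df$ by the Gr\"obner basis / Nullstellensatz device of Proposition~\ref{PC1}. Normalizing $H$ — which we may do by rescaling only the last two columns and the last two rows, leaving $M$ untouched — its first four rows become
\[\left[\begin{array}{rrrrrr}
1&1&1&1&1&1\\
1&-1&a&b&p_2&q_2\\
1&c&-1&d&p_3&q_3\\
1&e&f&-1&p_4&q_4
\end{array}\right],\]
where $a,\dots,f,p_2,q_2,p_3,q_3,p_4,q_4$ are treated as variables. The governing equations are the orthogonality polynomials $\mathcal{O}((x,y))=0$ for all ordered pairs $(x,y)\in\{1,2,3,4\}^2$ with $x\neq y$; as Remark~\ref{newremark} illustrates, using both orderings of each pair is what captures the genuine unimodular solutions rather than the bare orthogonality relation.

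To see why the conclusion is plausible, note the local content of these equations. Orthogonality with the constant row forces each of rows $2,3,4$ to sum to zero, so
\[p_2+q_2=-(a+b),\qquad p_3+q_3=-(c+d),\qquad p_4+q_4=-(e+f).\]
Since any two unimodular numbers whose sum $\sigma$ is nonzero have product $\sigma/\overline{\sigma}$, and since $\overline{a}=1/a$ etc., this gives $p_2q_2=ab$, $p_3q_3=cd$, $p_4q_4=ef$. This is exactly where the hypotheses $a+b\neq 0$, $c+d\neq 0$, $e+f\neq 0$ enter, and the remaining factors $c+e$, $a+f$, $b+d$ play the same role for the columns of $H$. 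Pairwise orthogonality of rows $2,3,4$ then ties the ratios $p_i/p_j$, $q_i/q_j$ to the known four-column inner products, and eliminating $p_2,\dots,q_4$ should leave relations in $a,\dots,f$ alone — precisely the Haagerup relations $\mathcal{H}((i,j,k),(p,q,r,s))=0$ of Lemma~\ref{L1}, which one may therefore adjoin to speed up (or replace) the elimination.

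To prove $ac=be$ I would adjoin to the ideal generated by the orthogonality (and, if needed, Haagerup) polynomials two further equations with auxiliary variables $t,w$,
\[t\cdot(ac-be)-1=0,\qquad w\cdot(a+b)(c+d)(e+f)(c+e)(a+f)(b+d)-1=0,\]
the first asserting $ac\neq be$ and the second that none of the six sums vanishes. A reduced Gr\"obner basis of the resulting ideal is expected to be $\{1\}$, so by Hilbert's Nullstellensatz the system is inconsistent, and hence $ac=be$ holds on the relevant variety. Replacing $ac-be$ by $be-df$ in the first auxiliary equation and repeating yields $be=df$, completing the proof.

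The step I expect to be the main obstacle is the elimination itself rather than any individual relation. The relation coming from a single pair of rows only couples four of the six variables — the pair $(2,3)$, for instance, involves just $a,b,c,d$ — so no one of them links all three symmetric products $ac$, $be$, $df$, and the global equality emerges only after the full system is combined and the hidden entries are eliminated. The six nonvanishing factors are essential here: they are precisely what licenses the divisions implicit in the elimination, and geometrically they cut out the locus complementary to the cancelling (Di\c{t}\u{a}) case already settled in Proposition~\ref{P51}, where the products $p_iq_i$ are not pinned down by $a,\dots,f$.
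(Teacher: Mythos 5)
Your logical scaffolding — write down necessary polynomial conditions, saturate against the nonvanishing factors with an auxiliary variable, compute a reduced Gr\"obner basis, conclude by Hilbert's Nullstellensatz, and then obtain the remaining equalities $ac=df=be$ by symmetry/relabelling — is exactly the paper's strategy, and your observations $p_2q_2=ab$, $p_3q_3=cd$, $p_4q_4=ef$ are correct and explain well where the row-sum factors enter. The genuine gap is in the choice of equations. Your system consists solely of the row-orthogonality polynomials $\mathcal{O}((x,y))$ for the first four rows and, as you yourself note, eliminating $p_2,\dots,q_4$ from it yields (at most) the Haagerup relations $\mathcal{H}((i,j,k),(1,2,3,4))=0$ for the row triples of $M$. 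The paper's proof adjoins, in addition, the eight Haagerup relations \emph{for $M^T$} — the constraints arising from orthogonality of the columns of $H$, which involve the two missing rows $5$ and $6$ and are \emph{not} consequences of your system. These transposed conditions are genuinely independent of row orthogonality: the matrix $W$ in \eqref{W} is a unimodular array with pairwise orthogonal rows that violates a transposed Haagerup condition and hence extends to no $6\times 6$ complex Hadamard matrix; this is the very phenomenon that forces Proposition~\ref{PC2} to invoke Lemma~\ref{L1} where Proposition~\ref{PC1} does not need it. Since every polynomial in your elimination ideal vanishes on all row-orthogonal unimodular completions of the pattern, your ideal is strictly weaker than the paper's $17$-equation system (whose Gr\"obner basis already takes $9.5$ hours), and there is no reason to expect — and good structural reason to doubt — that your reduced Gr\"obner basis comes out as $\{1\}$. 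A telltale symptom: the column-sum factors $(c+e)(a+f)(b+d)$ you saturate against can never be needed as denominators in a row-only elimination, whereas in the paper's mixed system they are on the same footing as $(a+b)(c+d)(e+f)$.

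Two smaller points. First, your saturation should also include the product of the variables themselves (the paper always uses $uabcdef\cdots+1=0$, etc.): the device of taking $\mathcal{O}((x,y))$ in both orders encodes conjugation via $\overline{h}=1/h$ only on the locus where all coordinates are nonzero, so solutions with vanishing coordinates are spurious yet would still obstruct a basis of $\{1\}$. Second, the repair for the main gap is simple and brings you exactly to the paper's proof: discard the unknowns $p_2,\dots,q_4$ altogether and work directly with the $16$ Haagerup polynomials $\mathcal{H}((i,j,k),(1,2,3,4))$ for both $M$ and $M^T$ (triples $i<j<k$ and $i>j>k$ from $\{1,2,3,4\}$), together with the single saturation $uabcdef(a+b)(c+d)(e+f)(c+e)(a+f)(b+d)(ac-be)+1=0$; the symmetry step you propose for $be=df$ is then legitimate, since simultaneous permutations of rows and columns $\{2,3,4\}$ (and transposition) permute the three products $ac$, $be$, $df$ while preserving the hypothesis.
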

\begin{proof}
The proof is computational, using the polynomial equations \eqref{POLYH} for $M$ and its transpose. We will prove that $ac-be\neq 0$ is impossible. Therefore $ac-be=0$, and by symmetry, $ac-df=0$, and $be-df=0$. Consider the following system of $17$ polynomial equations in the complex variables $a,b,c,d,e,f,u$, where
\[\begin{cases}
\mathcal{H}((i,j,k),(1,2,3,4))=0, (i,j,k)\in\{1,2,3,4\}^3, i<j<k \text{ or } i>j>k\\
\mathcal{H}((i,j,k),(1,2,3,4))=0, (i,j,k)\in\{1,2,3,4\}^3, i<j<k \text{ or } i>j>k,\ \text{for }M^T\\
uabcdef(a+b)(c+d)(e+f)(c+e)(a+f)(b+d)(ac-be)+1=0.
\end{cases}\]
Similarly as before, the auxiliary variable $u$ ensures that a certain polynomial expression is nonzero. A $9.5$-hours long Gr\"obner basis computation in Mathematica (with peak memory usage about 2.6GB) testifies that this system of equations has no solutions. Therefore, since $abcdef(a+b)(c+d)(e+f)(c+e)(a+f)(b+d)\neq 0$, we necessarily have $ac=be$.
\end{proof}
\begin{remark}
We would be interested to see a result similar in spirit to Lemma~\ref{L51} concerning the more general situation of $4\times 4$ submatrices with no vanishing row or column sums having generic diagonal elements $x$, $y$, and $z$.
\end{remark}
The following is the main result of this section.
\begin{proposition}\label{P3}
Let $H$ be a complex Hadamard matrix of order $6$, such that its upper-left $4\times 4$ submatrix has the following form:
\[M=\left[\begin{array}{rrrr}
1 & 1 & 1 & 1\\
1 & -1 & a & b\\
1 & c & -1 & d\\
1 & e & f & -1\end{array}\right],\]
where $(a+b)(c+d)(e+f)(c+e)(a+f)(b+d)\neq 0$. Then $H$ belongs to the $2$-circulant family \eqref{X6alpha}.
\end{proposition}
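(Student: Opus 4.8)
The plan is to build on Lemma~\ref{L51}, which under the standing nondegeneracy hypothesis gives $ac=be=df$; write $\rho$ for this common unimodular value. The first step determines the remaining entries of the top four rows. Applying Lemma~\ref{L0} to the all-ones first row, the pair $\mathcal{O}((2,1))=0$, $\mathcal{O}((1,2))=0$ forces the two unknown entries $h_{25},h_{26}$ of the second row to satisfy $h_{25}+h_{26}=-(a+b)$ and, using $a+b\neq 0$, $h_{25}h_{26}=ab$, so that $\{h_{25},h_{26}\}=\{-a,-b\}$; the same computation for rows $3$ and $4$ yields $\{h_{35},h_{36}\}=\{-c,-d\}$ and $\{h_{45},h_{46}\}=\{-e,-f\}$. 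Thus columns $5$ and $6$ of the top four rows are pinned down as multisets, and the block already displays the ``reflected'' shape of \eqref{X6alpha}; the per-row assignment of these negatives to the two physical columns --- which, as one sees in \eqref{X6alpha}, is crossed in the third row --- will be settled together with the gauge in the next step.

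The second step realizes this $4\times 6$ block as the top of a genuine member of \eqref{X6alpha}. I would introduce $\beta,\gamma,\varepsilon,\varphi$ by matching entries, setting $\beta\gamma=1/(ac)$, $\varepsilon=-\beta c$ and $\varphi=-\gamma e$. A short calculation shows that the identities $a=-1/(\gamma\varepsilon)$, $b=-1/(\beta\varphi)$, $c=-\varepsilon/\beta$, $e=-\varphi/\gamma$ then hold automatically, while matching the remaining entries $d=-\varepsilon/(\gamma\varphi)$ and $f=-\varphi/(\beta\varepsilon)$ amounts to the single condition $\beta^3=-de/(\rho^2 c)$, whose two incarnations coming from $d$ and from $f$ agree precisely because $df=\rho$. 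Hence the quadruple $(\beta,\gamma,\varepsilon,\varphi)$ exists and is determined by the block up to a finite cube-root ambiguity, which is absorbed by the equivalence relation; with it the top four rows of $H$ coincide with those of $X_6(\beta,\gamma,\varepsilon,\varphi)$, fixing the crossed column assignment along the way.

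The decisive step is to recover the last two rows and the constraint \eqref{implicit}. The ten entries of rows $5$ and $6$ (the first column being normalized) are governed by the orthogonality relations $\mathcal{O}((x,y))=0$ of Lemma~\ref{L0} against the first four rows and against each other, supplemented by the Haagerup relations of Lemma~\ref{L1}. I would assemble these, together with the unimodularity relations, the defining relations of $\beta,\gamma,\varepsilon,\varphi$, and an auxiliary variable enforcing the nondegeneracy factor $(a+b)(c+d)(e+f)(c+e)(a+f)(b+d)\neq 0$, into one polynomial system, and compute a reduced Gr\"obner basis. The expected output pins the ten unknowns to exactly the last two rows of \eqref{X6alpha} and produces the sole surviving relation among the parameters, which should be precisely \eqref{implicit}; since $H$ is a genuine Hadamard matrix its rows $5$ and $6$ do exist, so this relation is automatically satisfied and $H=X_6(\beta,\gamma,\varepsilon,\varphi)$ up to equivalence.

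The main obstacle I foresee is twofold. Algebraically, the Gr\"obner basis computation is potentially heavy, as in Lemma~\ref{L51}, and its output must be coaxed into the explicit circulant shape rather than merely shown to be nonempty, with the emerging parameter relation recognized as \eqref{implicit} only after clearing denominators. Combinatorially, one must track the finitely many labelling branches (which physical column is the fifth, the crossed assignment of $\{-c,-d\}$ in the third row, and the analogous choices in rows $5$ and $6$); I would handle each branch by exhibiting a column permutation and a diagonal phase rescaling carrying it into the standard normalization of \eqref{X6alpha}, so that every branch lands in the $2$-circulant family up to equivalence.
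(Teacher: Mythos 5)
Your overall architecture is the same as the paper's: invoke Lemma~\ref{L51} to get the common value $\Delta=ac=be=df$, pin the columns $5$--$6$ entries of rows $2$--$4$ as the multisets $\{-a,-b\}$, $\{-c,-d\}$, $\{-e,-f\}$ via the pair $\mathcal{O}((1,2))=\mathcal{O}((2,1))=0$ (after scaling columns $5,6$ so the first row is all ones), enumerate the finitely many sign-assignment patterns, solve the resulting polynomial systems by Gr\"obner bases, and exhibit an explicit quadruple matching \eqref{X6alpha}. Indeed your formula $\beta^3=-de/(\rho^2c)$ coincides with the paper's $(\beta^\ast)^3=-d/(abc^2)$ upon using $be=ac$, and your consistency check via $df=\rho$ is sound.

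However, there is a genuine gap in your disposal of the eight labelling branches, which is where the bulk of the paper's proof lives. You claim each branch can be carried into the standard normalization of \eqref{X6alpha} ``by exhibiting a column permutation and a diagonal phase rescaling,'' but this is false: swapping columns $5,6$ flips the assignment in all three rows simultaneously, so the patterns fall into genuinely distinct classes, and $X_6$ realizes only the one crossed in a single row. In the paper's analysis two of the eight canonical patterns ($H^{(2,1)}$, $H^{(2,2)}$) admit \emph{no} unimodular completion at all (reduced Gr\"obner basis $\{1\}$), so there is nothing to carry anywhere; the remaining non-generic patterns are completable only when forced degeneracies hold ($b=a$ for $H^{(1,2)}$, $d=c$ for $H^{(1,1)}$, and $c=a$, $d=b$ for the three patterns of Case 5), relations that must first be \emph{derived} by separate Gr\"obner computations and only afterwards allow an identification with the generic branch --- and that identification is a substitution of parameterized families, e.g.\ $H^{(1,1)}(a,b)=H^{(1,3)}(a,b,b^2/a,b^2/a)$, not an equivalence operation available a priori. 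For the same reason your Step 2 is valid only on the crossed branch: if $H$ realizes a different pattern, the matching conditions $d=-\varepsilon/(\gamma\varphi)$ and $f=-\varphi/(\beta\varepsilon)$ are inconsistent absent those degeneracies, so ``the quadruple $(\beta,\gamma,\varepsilon,\varphi)$ exists'' cannot be asserted before the case analysis rather than after it. A lesser but related omission: the paper makes structural use of the remaining nondegeneracy factors $(c+e)(a+f)(b+d)\neq 0$ by applying the multiset argument to columns $2$--$4$ of the transpose, pinning most entries of rows $5,6$ before any heavy computation; leaving all ten unknowns free, as you propose, makes the single ``one big system'' computation far less likely to terminate in practice, as the paper's $9.5$-hour computation for Lemma~\ref{L51} alone suggests.
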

In the proof we perform repeatedly routine Gr\"obner basis calculations (with respect to a conveniently chosen term order) aiming to discover low-degree algebraic relations between the matrix entries. The interested reader is advised to use a suitable computer algebra system of their choice for bookkeeping purposes.
\begin{proof}
Since $M$ does not contain any vanishing rows, we may complete the first four rows of $H$ formally in exactly $8$ ways by putting $\left[\begin{array}{cc}1&1\end{array}\right]$ in the first row followed by any of the patterns $\left[\begin{smallmatrix}-a &-b\\ -c & -d\\ -e & -f\end{smallmatrix}\right], \dots, \left[\begin{smallmatrix}-b & -a\\ -d & -c\\ -f & -e\end{smallmatrix}\right]$ in subsequent rows. We are free to choose the order of the rows and columns of $H$ up to permutations. Using Lemma~\ref{L51}, which says that $\Delta:=ac=be=df$, we readily see that irrespective of the pattern we choose, there exist (at least) two distinguished rows, in which the product of an entry in column $5$ with the entry in column $6$ of the other row equals $\Delta$. Furthermore, some of these distinguished rows may be permuted to the top so that $h_{25}=-h_{23}$ and $h_{36}=-h_{32}$, after appropriate column permutations maintaining the overall structure of $M$. Therefore we may assume that $H$ admits the following canonical form, up to equivalence (and relabelling if it is necessary):
	\[H^{(\xi,\zeta)}(a,b,c,d,w,x,y,z)=\left[\begin{array}{cccrrr}
		1 & 1 & 1 & 1 & 1 & 1\\
		1 & -1 & a & b & -a & -b\\
		1 & c & -1 & d & -d & -c\\%\multicolumn{2}{c}{\multirow{2}{*}{$U^{(\xi)}$}}\\
		1 & ac/b & ac/d & -1 & \multicolumn{2}{c}{U^{(\xi)}}\\
		1 & -c & \multicolumn{2}{c}{\multirow{2}{*}{$L^{(\zeta)}$}}          & w & x\\
		1 & -ac/b    &    & & y & z\end{array}\right].\]
Here for $\xi\in\{1,2\}$, $U^{(1)}=\left[\begin{array}{cc} -ac/b & -ac/d\end{array}\right]$ and $U^{(2)}=\left[\begin{array}{cc} -ac/d & -ac/b\end{array}\right]$. Similarly, for $\zeta\in\{1,2,3,4\}$, the missing lower-left $2\times 2$ submatrix $L^{(\zeta)}$ means the $\zeta$th matrix from the left in the following sequence:
\[\left[\begin{array}{cc}
		-a & -b\\
		-ac/d & -d\\
	\end{array}\right], \left[\begin{array}{cc}
		-a & -d\\
		-ac/d & -b\\
	\end{array}\right],\left[\begin{array}{cc}
		-ac/d & -b\\
		-a & -d\\
	\end{array}\right], \left[\begin{array}{cc}
		-ac/d & -d\\
		-a & -b\\
	\end{array}\right].\]
We will analyse each of these $8$ patterns by looking at the system of polynomial equations in the matrix entries $a,b,c,d,ac/b,ac/d,w,x,y,z,u$ formed by the orthogonality equations \eqref{ort} (also applied to $H^T$):
\[\begin{cases}
\mathcal{O}((i,j))=0,\ (i,j)\in\{1,2,3,4,5,6\}^2,\ i\neq j\\
\mathcal{O}((i,j))=0,\ (i,j)\in\{1,2,3,4,5,6\}^2,\ i\neq j,\ \text{for $H^T$}\\
uabcdwxyz(a+b)(c+d)(b+d)(a^2-4a+1)+1=0.
\end{cases}
\]
Here $u$ is an auxiliary variable, and we are preemptively discarding those phantom solutions where $a^2-4a+1=0$. By multiplying these equations by appropriate powers of $b$ and $d$, we obtain $4\cdot 15+1=61$ polynomial equations in the complex variables $a,b,c,d,w,x,y,z,u$. Since this system is rather complicated, we analyse its solutions in two phases.

In the first phase we consider the subsystem of those $24$ polynomials which do not feature any of the variables $u,w,x,y,z$, and, in addition, we include a new polynomial equation $uabcd(a+b)(c+d)(b+d)(a^2-4a+1)+1=0$. We compute a Gr\"obner basis with the goal of discovering several low-degree relations between the matrix entries $a$, $b$, $c$, and $d$. 

Using these relations (if any), in the second phase we substitute back into the full system so that to reduce its complexity, and we compute a Gr\"obner basis once again to find additional relations between the remaining variables. This process leads to the complete solution of the system of equations above.

It turns out that the analysis of the $8$ patterns $H^{(\xi,\zeta)}$ may be grouped into five cases, where the results are analogous. We describe these as follows.
	
{\bfseries Case 1}: The patterns $H^{(2,1)}$ and $H^{(2,2)}$ do not lead to any complex unimodular solutions, because the reduced Gr\"obner basis after the first phase is $\{1\}$. %We remark that assuming $1-4a+a^2\neq 0$ was essential here.
	
{\bfseries Case 2}:	We claim that every complex Hadamard matrix with pattern $H^{(1,3)}$ belongs to the $2$-circulant family $X_6$, see \eqref{X6alpha}. We found no useful relation during the first phase. In the second phase, a degree reverse lexicographic reduced Gr\"obner basis can be computed in about $3$ minutes giving $z=w=xy=ac$. After performing these substitutions in $H^{(1,3)}$ so that to eliminate $z$, $w$, and $y$, we find that the orthogonality equation $\mathcal{O}((2,5))=0$ implies that $x=-bc/d$. After plugging in the variable $x$ as indicated, there remains a pair of polynomial equations completely describing orthogonality of the matrix $H^{(1,3)}(a,b,c,d)$, namely:
\begin{equation}\label{H33def}
ac+bc-d+bd+cd-acd=0,
\end{equation}
and what is essentially its complex conjugate under the assumption that $|a|=|b|=|c|=|d|=1$: $b-ab-ac+abc-ad-bd=0$. Given a quadruple of complex unimodular numbers $(a,b,c,d)$, subject to \eqref{H33def}, we define the unimodular numbers
\[\beta^\ast:=-\left(\frac{d}{abc^2}\right)^{1/3},\quad \gamma^\ast:=-\frac{bc}{d}(\beta^\ast)^2,\quad \varepsilon^\ast:=-c\beta^\ast,\quad \varphi^\ast:=\frac{ac^2}{d}(\beta^\ast)^2,\]
where $\beta^\ast$ is any fixed complex number such that $(\beta^\ast)^3=-d/(abc^2)$. Using \eqref{H33def}, we readily see that the quadruple $(\beta^\ast, \gamma^\ast, \varepsilon^\ast,\varphi^\ast)$ satisfies \eqref{implicit}, and consequently plugging into \eqref{X6alpha} we see that $X_6(\beta^\ast,\gamma^\ast,\varepsilon^\ast,\varphi^\ast)=H^{(1,3)}(a,b,c,d)$ is equivalent to a $2$-circulant complex Hadamard matrix.
	
{\bfseries Case 3}:	We claim that the pattern $H^{(1,2)}$ leads to a subfamily of $X_6$. In the first phase we find that $b=a$, and subsequently $x=y=d^2$, $z=w=-d$. Then, from $\mathcal{O}((2,5))=0$ we find that $c=d^2/a$. We are left with a single polynomial (whose complex conjugate is essentially itself)
\begin{equation}\label{subfam}
a^2-a+2ad+d^2-ad^2=0
\end{equation}
ensuring orthogonality.	By permuting the last two rows of the resulting matrix $H^{(1,2)}(a,d)$ gives exactly $H^{(1,3)}(a,a,d^2/a,d)$ from Case 2 above, and plugging in $b\leftarrow a$ and $c\leftarrow d^2/a$ to equation \eqref{H33def} yields \eqref{subfam}, up to a negligible constant factor.
	
{\bfseries Case 4}: We move on to discuss the case $H^{(1,1)}$. In the first phase we find that $d=c$. After performing this substitution the second phase yields $x=y=-b$, and $z=w=b^2$. From $\mathcal{O}((2,5))=0$ we have $c=b^2/a$, and there remains a single relation between the variables $a$ and $b$:
\begin{equation}\label{sub2}
a-a^2-2ab-b^2+ab^2=0.
\end{equation}
Now $H^{(1,1)}(a,b)=H^{(1,3)}(a,b,b^2/a,b^2/a)$ and plugging in $c\leftarrow b^2/a$, $d\leftarrow b^2/a$ to equation \eqref{H33def} yields \eqref{sub2}, up to a constant factor. Thus, this is a subfamily of $X_6$.

{\bfseries Case 5}: In each of the cases $(\xi,\zeta)\in\{(1,4),(2,3),(2,4)\}$ we discover after the first phase $d=b$ and $c=a$. Using this, we get $y=x$, $z=w$ and then $x=-a$ and $w=a^2$. After these variable substitutions all three matrix patterns become identical, and the remaining two parameters $a$ and $b$ are related via the single equation $a^2-b+2ab-a^2b+b^2=0$. Interchanging the variables $a\leftrightarrow b$, the rows $r_3\leftrightarrow r_4$ and $r_5\leftrightarrow r_6$, and the columns $c_3\leftrightarrow c_4$, and $c_5\leftrightarrow c_6$ yields a matrix identical to what was described earlier in Case 4. We have a subfamily of $X_6$.
\end{proof}
We summarize the results of this section in the following.
\begin{theorem}\label{T54}
Let $H$ be a complex Hadamard matrix of order $6$. Then the following are equivalent:
\begin{enumerate}
\item[($a$)] every normalized matrix in the equivalence class of $H$ contains a (not necessarily contiguous) submatrix of the form \eqref{diagonalpattern};
\item[($b$)] some normalized matrix in the equivalence class of $H$ contains a (not necessarily contiguous) submatrix of the form \eqref{diagonalpattern};
\item[($c$)] $H$ is a member of the $2$-circulant family \eqref{X6alpha}, up to equivalence.
\end{enumerate}
\end{theorem}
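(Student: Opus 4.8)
The three conditions will be proven equivalent via the cycle $(a)\Rightarrow(b)\Rightarrow(c)\Rightarrow(a)$. The implication $(a)\Rightarrow(b)$ is immediate, since the equivalence class of $H$ is nonempty. For $(b)\Rightarrow(c)$ I would argue exactly as in the two propositions of this section: if some normalized representative contains a submatrix of the form \eqref{diagonalpattern}, then after permuting rows and columns its upper-left $4\times 4$ block becomes precisely the matrix $M$ appearing in Propositions~\ref{P51} and \ref{P3}. Splitting on whether the product $(a+b)(c+d)(e+f)(c+e)(a+f)(b+d)$ vanishes, Proposition~\ref{P51} places $H$ in the Di\c{t}\u{a} family \eqref{D6c}, which is a subfamily of $X_6$, in the vanishing case, while Proposition~\ref{P3} places $H$ in $X_6$ directly otherwise. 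Either way $(c)$ holds.

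The content lies in $(c)\Rightarrow(a)$, for which I would imitate the invariant-structure argument used for Theorem~\ref{F6CHAR}, but with a different invariant. The first step is to describe all normalized representatives explicitly. Normalizing with respect to a chosen base row $i$ and base column $j$ produces the dephased matrix with entries $D^{(i,j)}_{kl}=h_{kl}h_{ij}/(h_{kj}h_{il})$, and permuting the remaining rows and columns afterwards does not affect the presence of the pattern \eqref{diagonalpattern}; thus, up to such permutations, the normalized representatives are exactly the matrices $D^{(i,j)}$ for $i,j\in\{1,\dots,6\}$. The key identity is that $D^{(i,j)}_{kl}=-1$ if and only if $h_{ij}h_{kl}+h_{il}h_{kj}=0$, that is, the $2\times 2$ submatrix of $H$ on rows $\{i,k\}$ and columns $\{j,l\}$ is complex Hadamard, equivalently the rows $\{h_i,h_k\}$ cancel at the columns $\{j,l\}$ in the sense of Section~\ref{SECPRE2}. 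Under this dictionary the existence of a submatrix \eqref{diagonalpattern} in $D^{(i,j)}$ becomes the existence of a matching of size $3$ in the bipartite graph on the rows $k\neq i$ versus the columns $l\neq j$ whose edges record these cancelling $2\times 2$ blocks; note that the first row and column of $D^{(i,j)}$ are constant $1$, so every such $-1$ automatically lies off the base row and base column.

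Since this bipartite graph depends only on the equivalence class, I would verify the matching condition on the explicit standard form \eqref{X6alpha}. The decisive feature to extract is that the row pairs of $X_6$ which cancel \emph{identically}, i.e.\ for all values of $\beta,\gamma,\varepsilon,\varphi$, are exactly the nine pairs joining the parts $\{1,5,6\}$ and $\{2,3,4\}$, and that each such pair cancels \emph{fully}, partitioning the six columns into three cancelling column-pairs; no two rows inside a single part cancel identically. This is read off directly from \eqref{X6alpha}: for each of the nine cross pairs the six entrywise row ratios fall into three pairs of opposite Laurent monomials, whereas inside a part all six ratios are unsigned monomials. Consequently, for any base cell $(i,j)$ the three rows lying in the part opposite to $i$ each cancel fully with row $i$, so each contributes a unique cancelling partner column to column $j$; the three resulting fixed-point-free column involutions have pairwise fixed-point-free products, whence the three partner columns are always distinct. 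This yields the required size-$3$ matching, hence the pattern \eqref{diagonalpattern}, in every dephasing $D^{(i,j)}$, and proves $(c)\Rightarrow(a)$.

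The main obstacle I anticipate is establishing this $K_{3,3}$-shaped pattern of identically and fully cancelling row pairs, together with its parameter-independence: it rests on the precise magnitudes of the entries of $X_6$ lining up so that the relevant row ratios pair into exact opposites, a genuine (if elementary) feature of the construction \eqref{X6alpha} that must be checked rather than assumed. Once it is in place, the remaining combinatorics — the distinctness of the three partner columns for every base column, encoded in the three defining column involutions having pairwise fixed-point-free products — is a finite, parameter-free verification, and the degenerate Di\c{t}\u{a} sublocus \eqref{D6c} requires no separate treatment, since those cancellations persist there as well.
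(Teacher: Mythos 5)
Your proposal is correct, and while your handling of $(a)\Rightarrow(b)\Rightarrow(c)$ coincides exactly with the paper's (Propositions~\ref{P51} and \ref{P3}, splitting on the vanishing of $(a+b)(c+d)(e+f)(c+e)(a+f)(b+d)$, plus the fact that the Di\c{t}\u{a} family sits inside $X_6$), your argument for the substantive implication $(c)\Rightarrow(a)$ takes a genuinely different route. Both proofs rest on the same mechanism --- a $-1$ appears at position $(k,l)$ of the dephasing with base cell $(i,j)$ precisely when the $2\times 2$ submatrix of $H$ on rows $\{i,k\}$ and columns $\{j,l\}$ is complex Hadamard --- but the paper never touches the dephased form \eqref{X6alpha}: it passes to the block $2$-circulant form \eqref{2CIRCFORM}, observes that the block swap and multiplication by the block-diagonal shift built from the cyclic permutation matrix preserve that form, and thereby moves an \emph{arbitrary} pivot cell to the top-left corner of block $A$; the structural relations $k_{j1}=1/k_{1j}$ and $k_{jj}=-1/k_{11}$ for $j\in\{4,5,6\}$ then exhibit the three Hadamard $2\times 2$ blocks at a single canonical pivot, so only one configuration is ever checked. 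You instead verify the required structure for all thirty-six base cells at once, via the $K_{3,3}$ pattern of identically and fully cancelling row pairs on \eqref{X6alpha} itself; I checked this claim and it holds: the nine cross pairs between $\{1,5,6\}$ and $\{2,3,4\}$ each cancel fully, independently of $\beta,\gamma,\varepsilon,\varphi$ and of the implicit relation \eqref{implicit}, and the associated column involutions are always the three permutations $(12)(35)(46)$, $(13)(26)(45)$, $(14)(25)(36)$ in some order, which disagree at every point, so your size-$3$ matching exists for every base cell. Your reduction of all normalized representatives to the dephasings $D^{(i,j)}$ up to noninitial row and column permutations is also correct and is the right substitute for the paper's three-step normalization process (your phrase that the bipartite graph ``depends only on the equivalence class'' should strictly be read as: the collection of graphs over all base cells is permuted by equivalences, which is what your verification on the standard form actually uses). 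What the paper's route buys is economy: the symmetry reduction replaces your nine entrywise verifications and the involution combinatorics by a two-line computation at one pivot. What your route buys is explicitness: it isolates the parameter-independent cancelling structure of $X_6$ as the invariant behind the theorem, in the same spirit as the regularity notion of Section~\ref{SECPRE2}, and it yields as a by-product the exact positions of all forced $-1$ entries in every dephasing, not merely the existence of the pattern \eqref{diagonalpattern}.
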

\begin{proof}
The direction $(a)\Rightarrow(b)$ is trivial, and $(b)\Rightarrow(c)$ follows from the combination of Propositions~\ref{P51} and \ref{P3}, by noting that the Di\c{t}\u{a} family is a subfamily of the $2$-circulant family. Therefore, it remains to show that $(c)\Rightarrow (a)$.

Let $H$ be a member of the $2$-circulant family $X_6$, up to equivalence. By using the equivalence operations, we may assume that $H$ admits the following $2$-circulant structure (see \cite{SZ1}):
\begin{equation}\label{2CIRCFORM}
H=\left[\begin{array}{cc}
A&B\\
B^\ast& -A^\ast\end{array}\right],
\end{equation}
where $A$ and $B$ are $3\times 3$ circulant matrices, and $\ast$ denotes the conjugate transpose. 

Next observe that $H$ can be transformed into any equivalent, normalized matrix $L$ by the following three-step process: (i) permute the desired pivoting row and column of $H$ to the top and to the left, so that to become eventually the first row and column of $L$, respectively; (ii) normalize the obtained matrix; (iii) permute the noninitial rows and columns amongst themselves. We show that the resulting matrix $L$ has the required substructure \eqref{diagonalpattern}.

We may assume that the pivoting row and column intersect in block $A$ of \eqref{2CIRCFORM}, by interchanging the blocks and multiplying the last three rows by $-1$ if it is necessary. Note that this block exchange maintains the $2$-circulant pattern. Next consider the circulant permutation matrix $U=\left[\begin{smallmatrix}
0 & 1 & 0\\
0 & 0 & 1\\
1 & 0 & 0\end{smallmatrix}\right]$. Clearly, multiplying any $3\times 3$ matrix by $U$ on the right will cyclically shift its columns to the right; similarly, multiplying a matrix by $U$ on the left will cyclically shift its rows upwards. Therefore, multiplying $H$ by the block diagonal matrix $D=[U,U^\ast]$ on the right (or left) will cyclically shift the columns (or rows, respectively) of the blocks $A$ and $B$, while maintaining the overall $2$-circulant form. Repeated multiplication by $D$ will perform the desired step (i). We call the resulting intermediate matrix $K$.

In the matrix $K$ consider the $2\times 2$ submatrices  $\left[\begin{smallmatrix} k_{11} & k_{1j}\\
k_{j1} & k_{jj}\end{smallmatrix}\right]=\left[\begin{smallmatrix} k_{11} & k_{1j}\\
1/k_{1j} & -1/k_{11}\end{smallmatrix}\right]$, where $j\in\{4,5,6\}$. Now let's perform step (ii), that is, normalize $K$ by first multiplying its first row and column by arbitrary complex unimodular numbers $x$ and $y$, respectively, such that $xyk_{11}=1$. We obtain the following matrices:
\[\left[\begin{array}{cc}
k_{11} & k_{1j}\\
1/k_{1j} & -1/k_{11}\\
\end{array}\right]\leadsto\left[\begin{array}{cc}
xyk_{11} & xk_{1j}\\
y/k_{1j} & -1/k_{11}\\
\end{array}\right]\leadsto\left[\begin{array}{cc}
1 & 1\\
1 & -1/(xyk_{11})\\
\end{array}\right]=\left[\begin{array}{cr}
1 & 1\\
1 & -1\\
\end{array}\right].
\]
In order to complete step (ii), let's multiply the noninitial rows and columns by $1/(xk_{1j})$ and $k_{1j}/y$ respectively. Thus, for $j\in\{4,5,6\}$ we find three $-1$ entries in the normalized matrix just as in \eqref{diagonalpattern}. We arrive at the matrix $L$ by performing step (iii). This simply reshuffles the pattern \eqref{diagonalpattern}.
\end{proof}
\section{The case of two rows}\label{S3COLS}
In this section we consider $6\times 6$ complex Hadamard matrices having a submatrix:
\begin{equation}\left[\begin{array}{rrrr}\label{PART3}
1 &1&1&1\\
1 & -1 & -1 &\ast\\
1 & \ast & \ast & -1\end{array}\right].\end{equation}
That is, the matrix has three distinct columns, each containing a $-1$ entry, which are furthermore located in two distinct rows. We show that this pattern eventually leads to one of the previously described cases. The main tool we use is Karlsson's celebrated result \cite{K1} on $H_2$-reducible complex Hadamard matrices of order $6$.
\begin{theorem}[See \cite{K1}]\label{KARLSSON}
Let $H$ be a complex Hadamard matrix of order $6$. Assume that $H$ has a $2\times 2$ submatrix $K$ which is a complex Hadamard matrix. Then the rows and columns of $H$ can be permuted in a way so that $K$ becomes the top-left $2\times 2$ block, and all nine $2\times 2$ blocks become complex Hadamard matrices.
\end{theorem}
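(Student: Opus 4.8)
*Let $H$ be a complex Hadamard matrix of order $6$. Assume that $H$ has a $2\times 2$ submatrix $K$ which is a complex Hadamard matrix. Then the rows and columns of $H$ can be permuted in a way so that $K$ becomes the top-left $2\times 2$ block, and all nine $2\times 2$ blocks become complex Hadamard matrices.*

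The plan is to reduce the claim to a local statement about the nine $2\times 2$ blocks of $H$ and then to propagate the Hadamard property outward from the single given block. First I would use the equivalence operations to move $K$ into the top-left corner and to normalize, scaling rows and columns so that the first row and the first column are constant $1$. This costs nothing: scaling column $j$ by $\lambda$ and column $j'$ by $\mu$ multiplies both sides of the defining relation $h_{ij}h_{i'j'}+h_{ij'}h_{i'j}=0$ of a $2\times 2$ block by $\lambda/\mu$, so the property ``a given $2\times 2$ block is complex Hadamard'' is invariant under all row and column scalings. After normalizing, $K$ becomes $\left[\begin{smallmatrix}1&1\\1&-1\end{smallmatrix}\right]$ in the top-left.

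Next I would establish the border, that is, the top block-row and the left block-column. Because $K$ is Hadamard, the contribution of columns $1$ and $2$ to $\langle h_1,h_2\rangle$ already vanishes, so the remaining four unimodular terms $h_{1v}/h_{2v}$ ($v=3,4,5,6$) sum to zero; by Lemma~\ref{L22} they cancel pairwise, and by permuting columns $3,4,5,6$ I may assume that the column pairs $\{3,4\}$ and $\{5,6\}$ each form a Hadamard block with rows $1,2$. The transposed argument applied to $\langle c_1,c_2\rangle$ lets me permute rows $3,4,5,6$ so that the row pairs $\{3,4\}$ and $\{5,6\}$ form Hadamard blocks with columns $1,2$. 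This fixes the five border blocks together with the relations $h_{24}=-h_{23}$, $h_{26}=-h_{25}$, $h_{42}=-h_{32}$, and $h_{62}=-h_{52}$.

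The decisive step is to force the four interior blocks. For the interior row pair $(3,4)$, the border relation $h_{32}\overline{h_{42}}=-1$ cancels the column-$1,2$ contribution $1+(-1)$ to $\langle h_3,h_4\rangle$, leaving the four unimodular numbers $h_{3v}\overline{h_{4v}}$ ($v=3,4,5,6$) summing to zero; Lemma~\ref{L22} again forces a pairwise cancellation. The same reasoning applies to the row pair $(5,6)$ and, via the transpose, to the column pairs $(3,4)$ and $(5,6)$. If all of these induced cancellation patterns respect the block grouping $\{3,4\},\{5,6\}$ fixed by the border step, then the interior blocks are immediately Hadamard and we are done.

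The hard part will be ruling out the ``crossed'' cancellation patterns: Lemma~\ref{L22} might instead pair columns as $\{3,5\},\{4,6\}$ for the row pair $(3,4)$, a configuration that cannot be realigned by the only permutations still available, namely swapping the two columns inside a fixed pair or swapping the two pairs, since those preserve the top block-row. My plan for this obstacle mirrors the paper's methodology: for each combinatorially distinct crossed scenario I would write down the orthogonality polynomials \eqref{ort} for $H$ and $H^T$, adjoin the relevant Haagerup relations \eqref{POLYH} from Lemma~\ref{L1}, add an auxiliary equation asserting that the offending interior block is \emph{not} Hadamard, and compute a Gr\"obner basis; I expect each crossed case either to be inconsistent (basis $\{1\}$) or to collapse onto an aligned solution. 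Organizing these cases so that one simultaneous admissible permutation rectifies every interior pair at once is the genuine content of the statement, and this finite but delicate case analysis is the place where essentially all the difficulty lies; it is carried out in \cite{K1}.
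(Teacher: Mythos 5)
You were asked to prove a statement that the paper itself does not prove: Theorem~\ref{KARLSSON} is imported verbatim from Karlsson \cite{K1}, so there is no internal argument to compare against, and your proposal must be judged on its own terms. The routine part is handled correctly. Normalizing so that $K=\left[\begin{smallmatrix}1&1\\1&-1\end{smallmatrix}\right]$ sits in the top-left, extracting the two top border blocks from $\langle h_1,h_2\rangle=0$ via Lemma~\ref{L22}, the two left border blocks from the transposed argument, and then observing that the border relations $h_{42}=-h_{32}$, $h_{62}=-h_{52}$ (resp.\ $h_{24}=-h_{23}$, $h_{26}=-h_{25}$) annihilate the column-$\{1,2\}$ (resp.\ row-$\{1,2\}$) contributions to the interior inner products, so that Lemma~\ref{L22} again forces pairwise cancellation among the remaining four terms --- all of this is sound. (One trivial slip: rescaling columns $j,j'$ by $\lambda,\mu$ multiplies $h_{ij}h_{i'j'}+h_{ij'}h_{i'j}$ by $\lambda\mu$, not $\lambda/\mu$; the invariance you need holds regardless.)

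However, the argument stops exactly where the theorem begins. What you have actually established is that the five border blocks can be aligned and that each interior row pair and column pair cancels in \emph{some} pairing of the index set $\{3,4,5,6\}$; that these four pairings can simultaneously be made to respect the single grouping $\{3,4\},\{5,6\}$ is the entire content of Karlsson's theorem, and for this you offer only a plan: an unenumerated list of ``crossed'' cases, Gr\"obner computations that are not performed and whose outcome you merely ``expect'' (inconsistency \emph{or} collapse onto an aligned solution --- two materially different conclusions requiring different follow-up), and finally the sentence that the analysis ``is carried out in \cite{K1}'', which is a citation of the very result to be proved. Nothing in your text excludes, say, a crossed pattern for the row pair $(3,4)$ coexisting with an aligned one for $(5,6)$, nor do you argue that the residual permutation freedom suffices to realign a crossed solution if one exists --- and note that this freedom is in fact larger than the ``only permutations still available'' you list, since the border pairings delivered by Lemma~\ref{L22} need not be unique, so an exhaustiveness argument for the case analysis is also missing. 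As it stands, the proposal is a correct reduction to the crux plus a deferral to the literature, not a proof; completing it would require actually enumerating the crossed configurations and establishing their inconsistency or realignability, which is what Karlsson does in \cite{K1}.
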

We proceed by considering normalized Hadamard matrices having a row containing at least two $-1$ entries. Since such matrices are $H_2$-reducible by Theorem~\ref{KARLSSON}, we may assume that they have the following form. Hereafter the horizontal and vertical lines emphasize the $H_2$-reducible partition of the matrices.
\begin{lemma}\label{L62}
Let $H$ be a complex Hadamard matrix of order $6$, such that its upper-left $4\times 4$ submatrix has the following form:
\begin{equation}\label{M}
M=\left[\begin{array}{cr|cc}
	1 & 1 & 1 & 1\\
	1 & -1 & 1 & -1\\
	\hline
	1 & b & p & q\\
	1 & -b & w & -qw/p
\end{array}\right].\end{equation}
Then necessarily $(b-qw)(1+w+w^2)=0$.
\end{lemma}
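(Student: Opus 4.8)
The plan is to exploit the $H_2$-reducible structure already displayed in \eqref{M}, together with the orthogonality polynomials of Section~\ref{SECPRE1}, to force the claimed factorization. First I would write down the orthogonality relations among the four displayed rows. Reading the blocks as in Karlsson's theorem, the top-left $2\times 2$ block is the Fourier matrix $\left[\begin{smallmatrix}1&1\\1&-1\end{smallmatrix}\right]$, and by Theorem~\ref{KARLSSON} every $2\times 2$ block of the full matrix is complex Hadamard; this is precisely why the $(4,4)$ entry is forced to be $-qw/p$ (so that the bottom-right block $\left[\begin{smallmatrix}p&q\\ w&-qw/p\end{smallmatrix}\right]$ has orthogonal rows) and why the $(4,1),(4,2)$ entries read $1,-b$. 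The unimodularity constraints $|b|=|p|=|q|=|w|=1$ will be used freely to replace conjugates by reciprocals, as in Lemma~\ref{L0}.

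The key computation is the orthogonality of rows $3$ and $4$ of $M$, i.e.\ $\left\langle h_3,h_4\right\rangle=0$, which after clearing denominators gives the polynomial $\mathcal{O}((3,4))$ (and its conjugate $\mathcal{O}((4,3))$) in the variables $b,p,q,w$. I expect that the inner product between these two rows, restricted to the four displayed columns, produces a relation; but since these are only $4$ of the $6$ columns, the genuine constraint must come from combining row orthogonality over all six columns with the column orthogonality inherited from the $H_2$-reducible block structure. Concretely, I would use that the two missing columns $5,6$ of rows $3,4$ are themselves related by the block-Hadamard condition (each remaining $2\times 2$ block across rows $3,4$ is Hadamard), so that the contribution of columns $5,6$ to $\left\langle h_3,h_4\right\rangle$ is governed by the same $b,p,q,w$. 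Carrying out this substitution and simplifying, the target factor $(b-qw)(1+w+w^2)$ should emerge as the numerator of the resulting expression, with the two factors corresponding to the two ways Lemma~\ref{L22} can make four unimodular terms cancel.

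The cleanest route to the factorization itself is to apply the Haagerup polynomial $\mathcal{H}$ of Lemma~\ref{L1} to a suitable $3\times 4$ submatrix, rather than juggling inner products by hand. I would set up the system consisting of $\mathcal{O}((i,j))$ for the relevant row pairs together with $\mathcal{H}((\cdot,\cdot,\cdot),(1,2,3,4))$, and ask a Gr\"obner basis computation to verify that $(b-qw)(1+w+w^2)$ lies in the resulting ideal after imposing $|b|=|p|=|q|=|w|=1$ (encoded by adjoining $b\bar b-1$ etc., or by multiplying through by the products of the variables as in \eqref{ort}). The main obstacle I anticipate is bookkeeping the two unspecified columns correctly: the lemma's hypothesis only fixes a $4\times 4$ corner, so one must be careful that the conclusion genuinely follows from orthogonality of the \emph{full} rows and not merely from the $4\times 4$ data. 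Once that reduction is handled, the factored form is forced, and the disjunction $b=qw$ (placing us in a degenerate sub-case) versus $1+w+w^2=0$ (forcing $w$ to be a primitive cube root of unity, which will route the argument into the earlier Fourier or $2$-circulant cases) falls out immediately.
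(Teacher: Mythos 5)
Your fallback route is, in substance, the paper's actual proof: the authors impose only the Haagerup vanishing conditions of Lemma~\ref{L1} on $3\times 4$ submatrices lying entirely inside the corner \eqref{M} --- concretely $\mathcal{P}_1=\mathcal{H}((1,2,3),(1,2,3,4))$, $\mathcal{P}_2=p^2\,\mathcal{H}((1,2,4),(1,2,3,4))$, and $\mathcal{P}_3=\mathcal{H}((1,2,3),(1,2,3,4))$ for $M^T$ --- together with $\mathcal{P}_4=ubpqw+1$, where the auxiliary variable $u$ encodes nonvanishing (your ``multiply through by the products of the variables'' option, rather than adjoining separate conjugate variables). They then certify ideal membership not by a bare Gr\"obner run but by explicit witness polynomials $\mathcal{Q}_1,\dots,\mathcal{Q}_4$, found as a by-product of such a run, satisfying the identity $\sum_{i=1}^4\mathcal{P}_i\mathcal{Q}_i=8(b-qw)(1+w+w^2)$, which makes the conclusion human-verifiable. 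This also resolves your bookkeeping worry exactly as you hoped: the conditions of Lemma~\ref{L1} are consequences of orthogonality in the \emph{full} $6\times 6$ matrix, yet the three chosen instances involve only the known entries $b,p,q,w$, so the unspecified columns never enter.

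However, the ``key computation'' you lead with is vacuous, and you should see why. In the $H_2$-reducible pattern guaranteed by Theorem~\ref{KARLSSON}, every $2\times 2$ block across rows $3$ and $4$ is complex Hadamard, and a Hadamard block $\left[\begin{smallmatrix}x&y\\z&t\end{smallmatrix}\right]$ contributes $x/z+y/t=0$ to the inner product of its two rows. Hence $\left\langle h_3,h_4\right\rangle=0$ holds identically --- block by block --- for every choice of $b,p,q,w$ and of the parameters of the block in columns $5,6$ (which are genuinely free, not ``governed by the same $b,p,q,w$'' as you assert); the same cancellation kills $\mathcal{O}((3,4))$ and $\mathcal{O}((4,3))$ outright. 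So no constraint at all can be extracted from row orthogonality combined with the block structure; this is precisely why the paper reaches past plain orthogonality to the strictly stronger Haagerup conditions. Your plan succeeds only because you pivot to that route; had you stayed with $\mathcal{O}((3,4))$ you would have proved nothing.
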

\begin{proof}
Consider the system of polynomial equations in the variables $b,p,q,w,u$:
\[\begin{cases}
\mathcal{P}_1:=\mathcal{H}((1,2,3),(1,2,3,4))&=0,\\
\mathcal{P}_2:=p^2\mathcal{H}((1,2,4),(1,2,3,4))&=0,\\
\mathcal{P}_3:=\mathcal{H}((1,2,3),(1,2,3,4))&=0,\quad \text{for $M^T$}\\
\mathcal{P}_4:=ubpqw+1&=0.
\end{cases}\]
If $M$ is a submatrix of a complex Hadamard matrix, then by Lemma~\ref{L1} the polynomials $\mathcal{P}_i$, $i\in\{1,2,3\}$ are simultaneously zero, and $\mathcal{P}_4$ has a root at $u=-1/(bpqw)$. Consider the four witness polynomials, discovered as a by-product of a Gr\"obner basis computation: $\mathcal{Q}_1:=-buw+quw^4$, $\mathcal{Q}_2:=-pquw-b^2qu^2w$,
\begin{align*}
\mathcal{Q}_3&:=-2quw+pquw-quw^2-pq^3u^2w^2+q^3u^2w^3-2pq^3u^2w^3,\\
\begin{split}
\mathcal{Q}_4&:=8b+8bw-8qw+2b^3puw+2b^2quw+2bpq^2uw+8bw^2-8qw^2\\
&\phantom{:=\ }+4b^2quw^2+8bpq^2uw^2+2bp^2q^2uw^2-8qw^3+2b^2quw^3\\
&\phantom{:=\ }+8bpq^2uw^3+4bp^2q^2uw^3-2bq^2uw^4+4bpq^2uw^4.
\end{split}
\end{align*}
One can verify that $\sum_{i=1}^4\mathcal{P}_i\mathcal{Q}_i=8(b-qw)(1+w+w^2)$, a routine task using computer algebra. Therefore, if the polynomials $\mathcal{P}_i$ are simultaneously $0$, then the right-hand side also must be $0$.
\end{proof}
\begin{remark}
Let $H$ be a complex Hadamard matrix of order $6$, such that its upper-left $4\times 4$ submatrix has the following generic form:
\[\left[\begin{array}{cr|cc}
	1 & 1 & 1 & 1\\
	1 & -1 & a & -a\\
	\hline
	1 & b & p & q\\
	1 & -b & w & -qw/p
\end{array}\right].\]
Then, one may discover that $(1+a)(ab-qw)(1+w+w^2)(a^2+aw+w^2)=0$. While the proof we aware of is similar to that of Lemma~\ref{L62}, the witness polynomials are considerably more complicated. We would be very interested in understanding the reasons behind this identity. The special case $a=1$ leads to the more accessible, and in particular, human-verifiable Lemma~\ref{L62}.
\end{remark}
Now we turn to the classification of Hadamard matrices with the substructure \eqref{M}. There are two cases in the setting of Lemma \ref{L62}, either $1+w+w^2=0$ or $b=qw$. First we consider the former case.
\begin{proposition}\label{PROP63}
Assume that $H$ is a $6\times 6$ complex Hadamard matrix with submatrix $M$ as in \eqref{M}. Assume further that $b-qw\neq 0$. Then $H$ necessarily belongs to the transposed Fourier family, or to the following subfamily of the Fourier matrices:
\begin{equation}\label{Ha}
H(a)=\def\arraystretch{1.2}\left[\begin{array}{cr|cr|cr}
1 & 1 & 1 & 1 & 1 & 1\\
1 & -1 & 1 & -1 & a & -a\\
\hline
1 & 1 & w & w & w^2 & w^2\\
1 & -1 & w & -w & aw^2 & -aw^2\\
\hline
1 & 1 & w^2 & w^2 & w & w\\
1 & -1 & w^2 & -w^2 & aw & -aw\\ 
\end{array}\right],\ 1+w+w^2=0,\ a^2\neq 1.
\end{equation}
\end{proposition}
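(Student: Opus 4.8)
The plan is to exploit the $H_2$-reducible structure guaranteed by Theorem~\ref{KARLSSON} together with the branch $1+w+w^2=0$ provided by Lemma~\ref{L62}, and then to apply the orthogonality relations block by block. Since we are in the case $b-qw\neq 0$, Lemma~\ref{L62} forces $1+w+w^2=0$, so $w$ is a primitive cube root of unity. First I would record the consequences of $H_2$-reducibility: by Theorem~\ref{KARLSSON} all nine $2\times 2$ blocks of $H$ are themselves $2\times 2$ complex Hadamard matrices, so each has the form $\left[\begin{smallmatrix} \alpha & \beta\\ \gamma & -\alpha\beta/\gamma\end{smallmatrix}\right]$. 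This pins down the lower-right entry of every block in terms of the other three, drastically cutting the number of free parameters. Writing $H$ as a $3\times 3$ array of such Hadamard blocks, with the top-left block normalized to $\left[\begin{smallmatrix}1&1\\1&-1\end{smallmatrix}\right]$ as in \eqref{M}, I would introduce unknowns for the $(1,2)$ entry of each block and for the relevant ratios, and then impose the row-orthogonality of $H$.

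The key computational step is to combine the vanishing Haagerup polynomials \eqref{POLYH} (via Lemma~\ref{L1}), applied both to $M$ and to $M^T$, with the ordinary orthogonality polynomials \eqref{ort} for the remaining pairs of rows. Because $w$ is now a concrete cube root of unity, each $2\times 2$ block of the middle and bottom block-rows is constrained to have its column ratios be powers of $w$ (this is essentially the same mechanism as in Proposition~\ref{P21}, where $a+y+z=0$ forced $y/a,z/a$ to be primitive cube roots of unity). I would carry out a Gr\"obner basis computation in the block parameters, discarding the phantom branch $a^2=1$ by the standard auxiliary-variable trick $u\cdot(\text{nonzero expression})+1=0$. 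The expected output is that, up to the column ratios within each block being $\pm 1$ times a power of $w$, every admissible solution either collapses to the fully generic transposed Fourier family \eqref{Fab} (when the second parameter is genuinely free) or specializes to the one-parameter matrix $H(a)$ displayed in \eqref{Ha}, in which the second and third block-columns carry the factors $a$ and $aw^{\pm1}$ in a rigidly prescribed circulant-in-$w$ pattern.

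The main obstacle I anticipate is organizing the case distinction cleanly rather than the individual computations. The $H_2$-reducible form still leaves several discrete choices — the placement of the two $-1$ entries within the blocks of the second row, and the two possible orderings $b=aw, c=aw^2$ versus $b=aw^2, c=aw$ in each row that the cube-root condition permits — and I must check that these either reduce to one another under permutation of rows/columns (the equivalence operations), or else genuinely land in $F_6^T(a,b)$. Concretely, I expect that requiring orthogonality between the top block-row and each lower block-row forces the second columns of the blocks to realize the Fourier pattern $1,w,w^2$ down the block-rows, while the presence of the free parameter $a$ in the third block-column, uncoupled from the second, is exactly what produces the degenerate family \eqref{Ha} as a distinguished slice of the transposed Fourier family. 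The verification that $H(a)$ is indeed of Fourier type, and that no further genuinely new matrices appear, is where I would lean on the Gr\"obner basis certificate to guarantee completeness of the enumeration.
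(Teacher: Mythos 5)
Your plan has two genuine gaps, one structural and one in the case analysis. The structural gap: your reason for why the block entries are forced into the $w$-pattern is invalid. You appeal to ``the same mechanism as in Proposition~\ref{P21}'', but that mechanism depends on the second row being $(1,1,1,-1,-1,-1)$, so that $\left\langle h_1,h_k\right\rangle=0$ and $\left\langle h_2,h_k\right\rangle=0$ decouple into two vanishing \emph{three-term} unimodular sums, each forcing cube-root ratios. Here the second row is $(1,-1,1,-1,a,-a)$ with $a^2\neq 1$, and no such decoupling occurs: orthogonality gives genuine six-term relations. The paper obtains the crucial rigidification of \eqref{M} --- namely $q=bw$ and $p=w$ --- by a device absent from your proposal: it applies Lemma~\ref{L62} not just to $M$ but to three symmetry-transforms of $M$ (swap the last two rows; swap the two column pairs and renormalize; both), yielding the additional equations $1+p+p^2=(bp)^2+bpqw+(qw)^2=b^2+bq+q^2=0$, from which $q=bw$ follows by an explicit polynomial combination and $p=w$ by excluding $p=w^2$. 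Your substitute is a single monolithic Gr\"obner computation on all block parameters of the full $6\times 6$ matrix; the paper deliberately avoids this by staging (first the symmetry-derived constraints, then a $4\times 6$ orthogonality system giving $b^2=1$, then the observation that rows $\{r_1,r_2,r_5,r_6\}$ reproduce the same system, giving $c^2=1$), and a one-shot computation of that size is not known to terminate usefully --- compare the $9.5$-hour computation already needed for the much smaller system in Lemma~\ref{L51}. As written, the key step of your argument is an unsupported expectation rather than a proof.

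The case-analysis gap: you discard $a^2=1$ as a ``phantom'' branch via the auxiliary-variable trick, yet simultaneously expect the transposed Fourier family to appear in the Gr\"obner output --- these are inconsistent. In the paper $a^2=1$ is not phantom (unlike the genuinely spurious factor $a^2-4a+1$ in Proposition~\ref{P3}): it is exactly the branch producing the transposed Fourier alternative, handled \emph{before} the computation by noting that $a=\pm 1$ puts three $-1$ entries into the second row, so Theorem~\ref{F6CHAR} applies; only then is $a^2\neq 1$ imposed, and the computation yields solely $H(a)$. If you exclude $a^2=1$ without treating it separately, the transposed Fourier half of the conclusion is simply never established. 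Relatedly, your description of \eqref{Ha} as ``a distinguished slice of the transposed Fourier family'' is wrong: $H(a)$ is a subfamily of the \emph{untransposed} Fourier family (its transpose contains the pattern \eqref{F6FORM}), and for generic $a$ it is inequivalent to any member of \eqref{Fab} --- conflating the two collapses the very dichotomy the proposition asserts. A smaller point in the same direction: invoking Theorem~\ref{KARLSSON} to declare all nine blocks Hadamard requires checking that the block-aligning permutation can be chosen compatibly with keeping $M$ in the prescribed form \eqref{M}, which your setup assumes silently.
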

\begin{proof}
First we exploit the symmetries of the matrix \eqref{M}. Three additional matrices can be obtained with similar structure as follows: (a) swapping the last two rows; (b) swapping the first two columns and the last two columns followed by normalization; (c) performing (a) and then (b). Applying Lemma~\ref{L62} to these four matrices, subject to $b-qw\neq 0$ leads to the system of polynomial equations:
\[1+w+w^2=1+p+p^2=(bp)^2+bpqw+(qw)^2=b^2+bq+q^2=u(b-qw)+1=0.\]
Therefore: $0=-bqu(1+w+w^2)+uw(b^2+bq+q^2)+(q-bw)\big[u(b-qw)+1\big]=q-bw$. If $p=w^2$, then $(bp)^2+bpqw+(qw)^2=3b^2w^4\neq 0$. Therefore $p=w$, and we can extend the matrix $M$ with further two columns, to obtain the $H_2$-reducible pattern
\[\left[\begin{array}{cr|cr}
	1 & 1 & 1 & 1\\
	1 & -1 & 1 & -1\\
	\hline
	1 & b & w & bw\\
	1 & -b & w & -bw\\
\end{array}\right]\leadsto\left[\begin{array}{cr|cr|cc}
1 & 1 & 1 & 1 & 1 & 1\\
1 & -1 & 1 & -1 & a & -a\\
\hline
1 & b & w & bw & x & y\\
1 & -b & w & -bw & z & -yz/x\\
\end{array}\right],\]
where all $2\times 2$ blocks are complex Hadamard. Here, if $a^2=1$, then by Theorem~\ref{F6CHAR} the matrix belongs to the transposed Fourier family, and we are done. Otherwise, we may assume that $a^2\neq 1$ and proceed to consider the orthogonality equations (after appropriate scaling by $x$ if necessary):
\begin{equation}\label{prevstar}\begin{cases}
\mathcal{O}((i,j))=0,\ (i,j)\in\{1,2,3,4\}^2,\ i\neq j\\
1+w+w^2=0,\\
uabxyz(a^2-1)+1=0.
\end{cases}\end{equation}
We compute a Gr\"obner basis and discover $b^2=1$. By permuting rows $3$ and $4$ (and changing the variables $x$, $y$, $z$), we may assume that $b=1$ and consequently $x=y=w^2$. Next, we fill out the last two rows, and obtain
\[\def\arraystretch{1.2}\left[\begin{array}{cr|cr|cc}
1 & 1 & 1 & 1 & 1 & 1\\
1 & -1 & 1 & -1 & a & -a\\
\hline
1 & 1 & w & w & w^2 & w^2\\
1 & -1 & w & -w & z & -z\\
\hline
1 & c & w^2 & d\\
1 & -c & w^2 & -d\\
\end{array}\right]\leadsto\left[\begin{array}{cr|cr|cc}
1 & 1 & 1 & 1 & 1 & 1\\
1 & -1 & 1 & -1 & a & -a\\
\hline
1 & 1 & w & w & w^2 & w^2\\
1 & -1 & w & -w & aw^2 & -aw^2\\
\hline
1 & c & w^2 & cw^2 & s & t\\
1 & -c & w^2 & -cw^2 & v & -tv/s\\ 
\end{array}\right].\]
By considering $\mathcal{O}((2,4))=0$, we have $z=aw^2$. By the orthogonality of columns $2$ and $4$, we have $d=cw^2$. Finally, we fill out the lower right submatrix, and observe that the set of rows $\{r_1,r_2,r_5,r_6\}$ forms the exact same instance as the structure previously investigated in \eqref{prevstar}. Hence $c^2=1$ is the only possibility. By permuting the last two rows (and relabelling the remaining variables), we have $c=1$, $s=t=w$, $v=aw$, and $a$ remains a free parameter as in the statement.
\end{proof}
Finally, the last remaining case is when $b-qw=0$ in the pattern \eqref{M}. From now on we will also assume that an additional row contains a $-1$ entry as in \eqref{PART3}.
\begin{theorem}\label{T64}
Assume that $H$ is a $6\times 6$ complex Hadamard matrix having a submatrix \eqref{PART3}, up to equivalence. Then $H$ belongs to the transposed Fourier family \eqref{Fab}, or to the $2$-circulant family \eqref{X6alpha}.
\end{theorem}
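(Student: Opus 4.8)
The overall plan is to reduce everything to the two characterization theorems already established, Theorems~\ref{F6CHAR} and \ref{T54}: it suffices to show that a matrix carrying the two--row pattern \eqref{PART3} always admits a normalization carrying either the one--row pattern \eqref{F6FORM} or the diagonal pattern \eqref{diagonalpattern}.

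First I would exploit the structure of \eqref{PART3}. Its second row contains two $-1$ entries, so $H$ contains a $2\times 2$ Hadamard submatrix, and Karlsson's Theorem~\ref{KARLSSON} makes $H$ $H_2$-reducible; normalizing so that the relevant block is $\left[\begin{smallmatrix}1&1\\1&-1\end{smallmatrix}\right]$ puts $H$ into the form of Lemma~\ref{L62}, with upper-left submatrix \eqref{M}. The third $-1$ of \eqref{PART3}, lying in a row distinct from the second, then translates into an additional constraint on the remaining entries. Applying Lemma~\ref{L62} splits the argument according to whether $1+w+w^2=0$ or $b=qw$.

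Suppose first $b-qw\neq 0$, forcing $1+w+w^2=0$. Proposition~\ref{PROP63} then says that $H$ is either in the transposed Fourier family, in which case we are finished, or equivalent to one of the matrices $H(a)$ of \eqref{Ha}. For generic $a$ the matrix $H(a)$ has $-1$ entries in only two of its columns and hence cannot display \eqref{PART3}; our standing hypothesis that three distinct columns each carry a $-1$ therefore forces $a$ into the finite list of special values for which a further entry equals $-1$, namely $a\in\{w,w^2,-w,-w^2\}$. For each of these I would check directly that the three $-1$ entries occupy three distinct rows and three distinct columns, so that $H(a)$ contains the diagonal pattern \eqref{diagonalpattern}; Theorem~\ref{T54} then places $H(a)$ in the $2$-circulant family.

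The case $b=qw$ is where I expect the genuine difficulty. Here I would retain the $H_2$-reducible form, substitute $b=qw$, and enumerate the finitely many admissible positions of the third $-1$ demanded by \eqref{PART3} (in the lower-left block among the entries analogous to $b,p,q$, or among the entries of the last two rows). For each placement I would form the orthogonality polynomials \eqref{ort} for both $H$ and $H^{T}$ together with the Haagerup relations \eqref{POLYH} of Lemma~\ref{L1}, adjoin the auxiliary nonvanishing variable exactly as in the proofs of Propositions~\ref{PC1}, \ref{PC2} and \ref{P3}, and compute a reduced Gr\"obner basis. The expected result is that each admissible placement forces the free entries to values for which an explicit renormalization reveals either \eqref{F6FORM} or \eqref{diagonalpattern}, at which point Theorems~\ref{F6CHAR} and \ref{T54} finish the case. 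The principal obstacles are organizational and computational: reducing the positions of the third $-1$ to a short case list closed under the available equivalences, keeping the Gr\"obner basis computations tractable (as already delicate in Lemma~\ref{L51} and Proposition~\ref{P3}), and carrying out explicitly the renormalizations that expose the one--row or diagonal patterns rather than merely asserting their existence.
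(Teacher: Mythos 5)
Your reduction and your first two cases track the paper's own proof almost exactly: Karlsson's Theorem~\ref{KARLSSON} puts $H$ into the $H_2$-reducible form \eqref{M}, Lemma~\ref{L62} gives the dichotomy $1+w+w^2=0$ versus $b=qw$, and in the former case your observation that $H(a)$ generically has $-1$ entries in only two columns, so that the hypothesis forces $a\in\{w,-w,w^2,-w^2\}$ and the diagonal pattern \eqref{diagonalpattern} appears, is precisely the paper's argument, finished by Theorem~\ref{T54} (with Proposition~\ref{PROP63} covering the transposed Fourier alternative via Theorem~\ref{F6CHAR}).

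The gap is in the case $b=qw$, which you correctly identify as the hard one but leave as an ``expected result'' of Gr\"obner computations, and the expectation is off in two concrete ways. First, the solution sets here are positive-dimensional families (the conclusion of the theorem is membership in continuous families), so a reduced Gr\"obner basis of your full orthogonality-plus-Haagerup system will not ``force the free entries to values''; at best it yields low-degree relations, and one must then argue pattern membership on whole parametric families. The paper's organizing device, which your plan lacks, is Lemma~\ref{L22}: after first dispatching $a^2=1$ by Theorem~\ref{F6CHAR} and placing the third $-1$ at $h_{35}$, the orthogonality $\left\langle h_1,h_3\right\rangle=0$ gives the trichotomy $(y+b)(y+p)(y+q)=0$, and each branch is then resolved mostly by hand, with a single light Gr\"obner computation producing only the relation $c^2=1$ in one branch. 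This matters practically: the unstructured $6\times 6$ systems you propose are of the same scale as the one that already took $9.5$ hours in Lemma~\ref{L51}. Second, your claim that every admissible placement is finished by renormalizing to \eqref{F6FORM} or \eqref{diagonalpattern} misses the sub-case $p=-1$, where no pattern-spotting occurs in the given normalization: there the first three rows are pairwise cancelling, and the paper invokes the regular-Hadamard machinery of Theorem~\ref{TTriangle} to conclude membership in the Di\c{t}\u{a} family \eqref{D6c}, which lies inside \eqref{X6alpha}. Your scheme would have to discover this positive-dimensional component and still route through Theorem~\ref{T54}, $(c)\Rightarrow(a)$, to see the pattern a posteriori, so the cancelling-rows detour is not an optional convenience but a needed idea your sketch does not supply.
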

\begin{proof}
We may permute the rows and columns of $H$ so that its upper left $4\times 4$ submatrix is of the form \eqref{M}, where, in addition, the third or fourth row contains a $-1$ entry at positions $3$, $5$ or $6$. In particular, Lemma~\ref{L62} applies.

If, in addition, $b-qw\neq0$, then Proposition~\ref{PROP63} is in effect, and either we are dealing with the transposed Fourier family, up to equivalence; or with the subfamily $H(a)$ exactly as given in \eqref{Ha}. In this family of matrices additional $-1$ entries may appear when the parameter $a\in\{w,-w,w^2,-w^2\}$. However, in all of these cases $H(a)$ will contain three $-1$ entries aligned as in pattern \eqref{diagonalpattern}, hence by Theorem~\ref{T54} these matrices belong to the $2$-circulant family.

Therefore, we may consider the last remaining case $w=b/q$, and the matrices:
\[\left[\begin{array}{cr|cc|cc}
	1 & 1 & 1 & 1 & 1 & 1\\
	1 & -1 & 1 & -1 & a & -a\\
	\hline
	1 & b & p & q & x & y\\
	1 & -b & b/q & -b/p & z & -yz/x
\end{array}\right]\leadsto\left[\begin{array}{cr|cc|rr}
1 & 1 & 1 & 1 & 1 & 1\\
1 & -1 & 1 & -1 & a & -a\\
\hline
1 & b & p & q & -1 & y\\
1 & -b & b/q & -b/p & z & yz
\end{array}\right].\]
Now if $a^2=1$, then we must have the transposed Fourier family by Theorem~\ref{F6CHAR}. Therefore we may assume that $a^2\neq 1$. By swapping the last two rows and relabelling, we may assume that the additional $-1$ entry is in the third row. If $p=-1$, then the first three rows are clearly pairwise cancelling, and we must have the Di\c{t}\u{a} family by Theorem~\ref{TTriangle}. Therefore we proceed with the case $p\neq -1$, and up to permuting the last two columns and relabelling if necessary, we set $h_{35}=-1$. Since $\left\langle h_1,h_3\right\rangle=0$, Lemma~\ref{L22} gives $(y+b)(y+p)(y+q)=0$, leading to the following three cases.

{\bfseries Case 1}: First consider $y=-b$, $q=-p$, leading to the matrix:
\[\left[\begin{array}{cr|cc|rc}
	1 & 1 & 1 & 1 & 1 & 1\\
	1 & -1 & 1 & -1 & a & -a\\
	\hline
	1 & b & p & -p & -1 & -b\\
	1 & -b & -b/p & -b/p & z & -bz
\end{array}\right]\leadsto\left[
\begin{array}{cr|cc|rr}
1 & 1 & 1 & 1 & 1 & 1 \\
1 & -1 & 1 & -1 & a & -a \\
\hline
1 & b & p & -p & -1 & -b \\
1 & -b & -b/p & -b/p & -a & ab \\
\end{array}
\right].\]
We show that $z=-a$. If $b\neq -1$, then this follows immediately from $\left\langle h_2,h_4\right\rangle=0$. Otherwise if $b=-1$, then $\left\langle h_1,h_4\right\rangle=0$ implies $1+z+pz=0$, thus $pz=z^2$, $p=z$. Finally, $0=\left\langle h_1,h_4\right\rangle-\left\langle h_2,h_3\right\rangle=2z+2a$ leads to $z=-a$ as claimed.

Next, we have $0=abp\left\langle h_1,h_4\right\rangle+b p\left\langle h_2,h_3\right\rangle=2b-2ap^2$, and we get the matrix:
\[\left[
\arraycolsep=3pt\def\arraystretch{1.2}\begin{array}{cc|cc|rc}
	1 & 1 & 1 & 1 & 1 & 1 \\
	1 & -1 & 1 & -1 & a & -a \\
	\hline
	1 & ap^2 & p & -p & -1 & -ap^2 \\
	1 & -ap^2 & -ap & -ap & -a & a^2p^2 \\
\end{array}
\right]\leadsto \left[
\def\arraystretch{1.2}\begin{array}{cc|cc|rc}
	1 & 1 & 1 & 1 & 1 & 1 \\
	1 & -1 & 1 & -1 & a & -a \\
	\hline
	1 & ap^2 & p & -p & -1 & -ap^2 \\
	1 & -ap^2 & -ap & -ap & -a & a^2p^2 \\
	\hline
	1 & c  & f & g &  & \\
	1 & -c & h & -gh/f &  & \\
\end{array}
\right].\]
We extend its first four columns, and consider the column orthogonality equations (after scaling by $f$ if it is necessary):
\[\begin{cases}
\mathcal{O}((i,j))=0,\ (i,j)\in\{1,2,3,4\}^2,\ i\neq j\ \text{ (for $H^T$)}\\
uacfghp(a^2-1)+1=0.
\end{cases}\]
We compute a Gr\"obner basis to discover $c^2=1$. Since both $c$ and $-c$ are in this matrix, it is equivalent to one with three $-1$s on the diagonal. Therefore, by Theorem~\ref{T54}, $H$ belongs to the $2$-circulant family.

{\bfseries Case 2}: If $y=-p$, $q=-b$, then we have:
\[\left[\begin{array}{cr|rc|rc}
	1 & 1 & 1 & 1 & 1 & 1\\
	1 & -1 & 1 & -1 & a & -a\\
	\hline
	1 & b & p & -b & -1 & -p\\
	1 & -b & -1 & -b/p & z & -pz
\end{array}\right],\]
and we immediately see that there are three distinct rows and columns containing a $-1$ entry. Therefore, by Theorem~\ref{T54}, $H$ belongs to the $2$-circulant family.

{\bfseries Case 3}: Finally, let $y=-q$, $p=-b$. This leads to the matrix:
\[\left[\begin{array}{cr|cr|rc}
	1 & 1 & 1 & 1 & 1 & 1\\
	1 & -1 & 1 & -1 & a & -a\\
	\hline
	1 & b & -b & q & -1 & -q\\
	1 & -b & b/q & 1 & z & -qz
\end{array}\right]\leadsto\left[
\begin{array}{cr|rr|rr}
 1 & 1 & 1 & 1 & 1 & 1 \\
 1 & -1 & 1 & -1 & a & -a \\
\hline
 1 & b & -b & -1 & -1 & 1 \\
 1 & -b & -b & 1 & z & z \\
\end{array}
\right].\]
First, observe that column $4$ must contain at least two $-1$ entries. If $q\neq -1$, then there must be a $-1$ in rows $5$ or $6$ and therefore by Theorem~\ref{T54}, $H$ belongs to the $2$-circulant family. 

Otherwise, if $q=-1$, then $0=(1-a)bz\left\langle h_1,h_4\right\rangle-b(1+z)\left\langle h_2,h_3\right\rangle=2(1+a z)$. Therefore $z=-1/a$. Further, $\left\langle h_1,h_4\right\rangle/2=1+(-a)+(-1/b)=0$, and hence $a^2=-1/b$, equivalently $b=-1/a^2$.
\[\left[
\begin{array}{cr|rr|cc}
 1 & 1 & 1 & 1 & 1 & 1 \\
 1 & -1 & 1 & -1 & a & -a \\
\hline
 1 & b & -b & -1 & -1 & 1 \\
 1 & -b & -b & 1 & -1/a & -1/a \\
\end{array}
\right]\leadsto\left[
\arraycolsep=4pt\def\arraystretch{1.2}\begin{array}{cc|cr|cc}
 1 & 1 & 1 & 1 & 1 & 1 \\
 1 & -1 & 1 & -1 & a & -a \\
\hline
 1 & -1/a^2 & 1/a^2 & -1 & -1 & 1 \\
 1 & 1/a^2 & 1/a^2 & 1 & -1/a & -1/a \\
\end{array}
\right].\]
Since the rows are orthogonal, we have $1-a+a^2=0$. Therefore the last two entries in the third column are both $-1/a$. Similarly, since $1-1/a-a=0$, the last two entries in the sixth column are $-a$ and $-1/a$ in some order. Therefore up to swapping the last two rows, we have:
\[\left[
\arraycolsep=3pt\def\arraystretch{1.2}\begin{array}{cr|cr|cc}
 1 & 1 & 1 & 1 & 1 & 1 \\
 1 & -1 & 1 & -1 & a & -a \\
\hline
 1 & -1/a^2 & 1/a^2 & -1 & -1 & 1 \\
 1 & 1/a^2 & 1/a^2 & 1 & -1/a & -1/a \\
\hline
	1 & c & -1/a & g & s & -a\\
	1 & -c & -1/a & -g & -s/a^2 & -1/a\\ 
\end{array}
\right]\leadsto\left[
\arraycolsep=3pt\def\arraystretch{1.2}\begin{array}{cr|cr|cc}
 1 & 1 & 1 & 1 & 1 & 1 \\
 1 & -1 & 1 & -1 & a & -a \\
\hline
 1 & -1/a^2 & 1/a^2 & -1 & -1 & 1 \\
 1 & 1/a^2 & 1/a^2 & 1 & -1/a & -1/a \\
\hline
	1 & c & -1/a & 1 & s & -a\\
	1 & -c & -1/a & -1 & -s/a^2 & -1/a\\ 
\end{array}
\right].\]
But this implies that
\begin{multline*}
0=a^2(a+ag+2g-2)\left\langle h_3,h_2\right\rangle-2\left\langle h_5,h_3\right\rangle\\
+2a^2\left\langle h_6,h_1\right\rangle-2a^3\left\langle h_6,h_2\right\rangle-2a\left\langle h_6,h_4\right\rangle=6(g-1),
\end{multline*}
thus we obtain a matrix having three distinct rows and columns containing a $-1$. By Theorem~\ref{T54}, $H$ belongs to the $2$-circulant family.
\end{proof}
By combining the results described in Theorems~\ref{F6CHAR}, \ref{T54} and \ref{T64}, we obtain the proof of Theorem~\ref{T1}.
\section*{Acknowledgements}
\'A.K.M.\ is supported by the Ministry of Innovation and Technology NRDI Office within the framework of the Artificial Intelligence National Laboratory, Hungary (RRF-2.3.1-21-2022-00004) and the Hungarian NRDI Office grants NKFIH K-138828 and PD-145995. F.Sz.\ is supported in part by JSPS KAKENHI Grant Number 24K06829.


\begin{thebibliography}{10}
\bibitem{B} \textsc{T. Banica, J. Bichon, J.-M. Schlenker}: Representations of quantum permutation algebras, {\it J. Funct. Anal.} 257, 2864--2910 (2009).
\bibitem{BN} \textsc{K. Beauchamp, R. Nicoara}: Orthogonal maximal abelian $\ast$-subalgebras of the $6\times 6$ matrices, {\it Linear Algebra Appl.} 428, 1833--1853 (2008).
\bibitem{KH} \textsc{D. Best, H. Kharaghani, H. Ramp}: Mutually unbiased weighing matrices, {\it Des. Codes Cryptogr.} 76, 237--256 (2015).
\bibitem{BJ} \textsc{G. Bj\"ork, R. Fr\"oberg}: A faster way to count the solutions of inhomogeneous systems of algebraic equations, with applications to cyclic $n$-roots, {\it J. Symbolic Comput.} 12, 329--336 (1991).
\bibitem{BONDAL} \textsc{A. Bondal, I. Zhdanovskiy}: Orthogonal pairs and mutually unbiased bases, {\it J. Math. Sci. (N.Y.)} 216, 23--40 (2016).
\bibitem{BW} \textsc{S. Brierley, S. Weigert}: Constructing mutually unbiased bases in dimensions six, {\it Phys. Rev. A} 79, 052316 (2009).
\bibitem{D} \textsc{P. Di\c{t}\u{a}}: Some results on the parametrization of complex Hadamard matrices, {\it  J. Phys. A: Math. Gen.} 37, 5355 (2004).
\bibitem{FA} \textsc{J.-C. Faug\`ere}: Finding all the Solutions of Cyclic $9$ using Gr\"obner Basis Techniques, in Fifth Asian Symposium on Computer Mathematics - ASCM'2001 (Matsuyama, Japan) 1--12 (2001).
\bibitem{G} \textsc{D. Goyeneche}: Mutually unbiased triplets from non-affine families of complex Hadamard matrices in dimension $6$, {\it J. Phys. A} 46, 105301 (2013).
\bibitem{GR} \textsc{M. Grassl}: On SIC-POVMs and MUBs in Dimension 6, in Proceedings ERATO Conference on Quantum Information Science 2004 (EQIS 2004), Tokyo, Sept. 5, pp. 60--61 (2004).
\bibitem{UH} \textsc{U. Haagerup}: Orthogonal maximal abelian $\ast$-subalgebras of the $n\times n$ matrices and cyclic $n$-roots, in Operator Algebras and Quantum Field Theory (Rome), (MA International Press, Cambridge) 296--322 (1996).
\bibitem{HRZ} \textsc{P. Horodecki, \L. Rudnicki, K. \.Zyczkowski}: Five open problems in quantum information theory, {\it PRX Quantum} 3, 010101 (2022).
\bibitem{MI} \textsc{T. Ikuta, A. Munemasa}: Complex Hadamard Matrices contained in a Bose--Mesner algebra, {\it Spec. Matrices} 3, 91--110 (2015).
\bibitem{MORA} \textsc{Ph. Jaming, M. Matolcsi, P. M\'ora, F. Sz\"oll\H{o}si, M. Weiner}: A generalized Pauli problem and an infinite family of MUB-triplets in dimension 6, {\it J. Phys. A}, 42:24, 245305, (2009).
\bibitem{J} \textsc{J. Jedwab, A. Wiebe}: Constructions of complex equiangular lines from mutually unbiased bases, {\it Des. Codes Cryptogr.} 80, 73--89 (2016).
\bibitem{K1} \textsc{B. R. Karlsson}: $H_2$-reducible complex Hadamard matrices of order $6$, {\it Linear Algebra Appl.} 434, 239--246 (2011).
\bibitem{LAZ} \textsc{D. Lazard}: Solving zero-dimensional algebraic systems {\it J. Symbolic Comput.} 13, 117--131 (1992).
\bibitem{L} \textsc{M. Liang, M. Hu, Y. Sun, L. Chen, X. Chen}: Real entries of complex Hadamard matrices and mutually unbiased bases in dimension six, {\it Linear Multilinear Algebra} 69:15, 2908--2925 (2019).%
\bibitem{MSZ} \textsc{M. Matolcsi, F. Sz\"oll\H{o}si}: Towards a classification of $6\times 6$ complex Hadamard matrices, {\it Open Syst. Inf. Dyn.}, 15:2, 93--108 (2008).
\bibitem{MC} \textsc{G. McConnell, H. Spencer, A. Tahir}: Evidence for and against Zauner's conjecture in $\mathbb{C}^6$, {\it Quantum Inf. Comput.} 21, 721--736 (2021).
\bibitem{SHI} \textsc{M. Shi, Y. Li, W. Cheng, D. Crnkovi\'c, D. Krotov, P. Sol\'e}: Self-dual bent sequences for complex Hadamard matrices, {\it Des. Codes Cryptogr.} 91, 1453--1474 (2023).
\bibitem{SZ1} \textsc{F. Sz\"oll\H{o}si}: A two-parameter family of complex Hadamard matrices of order 6 induced by hypocycloids, {\it Proc. Amer. Math. Soc.} 138, 921--928 (2010).
\bibitem{SZ2} \textsc{F. Sz\"oll\H{o}si}: Complex Hadamard matrices of order $6$: a four-parameter family, {\it J. Lond. Math. Soc. (2)} 85:3, 616--632 (2012). 
\bibitem{TZ} \textsc{W. Tadej}, \textsc{K. \.Zyczkowski}: A concise guide to complex Hadamard matrices, {\it Open Sys. \& Information Dyn.} 13, 133--177 (2006).
\bibitem{LEE} \textsc{W. L. Tune}: Some Classification Results for Hadamard Matrices of Order 6, {\it Chancellor's Honors Program Projects}, \url{https://trace.tennessee.edu/utk_chanhonoproj/1401} (2010).
\bibitem{Z} \textsc{G. Zauner}: Quantendesigns -- Grundz\"ge einer nichtkommutativen Designtheorie. Dissertation, Universit\"at Wien (1999).
\end{thebibliography}
\end{document}